\theoremstyle{definition}
\newtheorem*{ex*}{Example}
\newtheorem*{defn}{Definition}
\theoremstyle{plain}
\newtheorem{thm}{Theorem}[section]
\newtheorem{prop}[thm]{Proposition}
\newtheorem*{cor*}{Corollary}
\newtheorem{lem}[thm]{Lemma}
\newtheorem*{claim}{Claim}
\newtheorem{thm*}{Theorem}
\theoremstyle{remark}
\numberwithin{equation}{section}
\DeclareMathOperator{\codim}{codim}
\DeclareMathOperator{\End}{End}
\DeclareMathOperator{\Hom}{Hom}
\DeclareMathOperator{\Div}{div}
\DeclareMathOperator{\Span}{span}
\def\C{\mathbb{C}}
\def\P{\mathbb{P}}
\def\R{\mathbb{R}}
\def\Z{\mathbb{Z}}
\def\P{\mathbb{P}}
\def\fan{\Delta}
\def\nn{{\bf n}}
\def\n0{{\bf n_0}}
\def\htop{h_{\rm top}}
\def\transp #1{\vphantom{#1}^{\mathrm t}\! {#1}}
\begin{document}

\title[Monomial Maps]
{Pulling Back Cohomology Classes and \\ Dynamical Degrees Of Monomial Maps}

\author{Jan-Li Lin}

\address{Department of Mathematics, Indiana University, Bloomington, IN 47405, USA}

\email{janlin@indiana.edu}

\subjclass{}

\keywords{}

\begin{abstract}
We study the pullback maps on cohomology
groups for equivariant rational maps (i.e., monomial maps) on toric varieties. Our method
is based on the intersection theory on toric varieties. We use the method
to determine the dynamical degrees of monomial maps
and compute the degrees of the Cremona involution.
\end{abstract}

\maketitle


\section{Introduction}

For a meromorphic map $f:X\dashrightarrow X$
on a compact K\"{a}hler manifold $X$ of dimension $n$, and for $1\le k\le n$,
there is a well-defined pullback map $f^* : H^{k,k}(X;\R)\to H^{k,k}(X;\R)$. The {\em $k$-th dynamical degree}
of $f$ is then defined as
\[
\lambda_k(f) = \lim_{\ell\to\infty}  \bigl\| (f^\ell)^* \bigr\|^{1 / \ell}.
\]

The dynamical degrees
$\lambda_1(f),\cdots,\lambda_n(f)$ form an important family of birational invariants.
The computation and properties of the first dynamical degree of various maps is an active
research topic.
On the other hand, much less is known about the higher dynamical degrees.
In this paper, we focus on monomial maps, and we compute the higher dynamical degrees of dominant monomial maps.

Given an $n\times n$ integer matrix $\psi=(a_{i,j})$, the associated monomial map
$f:(\C^*)^n\to(\C^*)^n$ is defined by
\[
\textstyle{f(x_1,\cdots,x_n)=( \prod_{j=1}^n x_j^{a_{1,j}},\cdots ,\prod_{j=1}^n x_j^{a_{n,j}} ).  }
\]
The map $f$ give rise to an equivariant rational map on toric varieties which contain $(\C^*)^n$ as a dense open subset,
and $f$ is dominant if and only if $\det(\psi)\ne 0$.
The main theorem of this paper is the following.

\begin{thm*}
\label{thm:main}
Let $f$ be the monomial
map induced by $\psi$, and let $f$ be dominant. Let $|\mu_1|\ge |\mu_2|\ge \cdots\ge |\mu_n|$
be the eigenvalues (counting multiplicities) of $\psi$.
Then the $k$-th dynamical degree is
\[
\lambda_k(f) = |\mu_1\cdots\mu_k|.
\]
\end{thm*}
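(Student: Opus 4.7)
The plan is to compute $\lambda_k(f)$ by working on a smooth projective toric compactification $X = X_\Delta$ of $(\C^*)^n$ and exploiting the combinatorial intersection theory available there. First I would fix such a model so that $f:X\dashrightarrow X$ is torus-equivariant, and record that $H^{k,k}(X;\R)$ is generated by the classes $[V(\sigma)]$ of closures of codimension-$k$ torus orbits, where $\sigma$ ranges over $\Delta^{(n-k)}$; equivalently, this group injects (after suitable refinement) into the space of $\R$-valued Minkowski weights on $\Delta^{(n-k)}$. This gives a concrete linear-algebra target on which $f^*$ acts.

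Next, I would derive a formula for $f^*$ on $H^{k,k}(X;\R)$ in terms of $\psi$. Because $f$ acts on the cocharacter lattice $N$ by $\psi$, the pullback of a toric class $[V(\sigma)]$ is a weighted sum of classes $[V(\tau)]$ corresponding to cones $\tau$ that $\psi$ maps into $\sigma$, with multiplicity given by a lattice index. The key structural observation is that this combinatorial pullback on $H^{k,k}$ is, up to lower-order boundary contributions, governed by the exterior power $\wedge^k\psi : \wedge^k N_\R \to \wedge^k N_\R$, whose eigenvalues are exactly the products $\mu_{i_1}\cdots\mu_{i_k}$ over $k$-subsets of the eigenvalues of $\psi$. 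Since $f^\ell$ is itself the monomial map associated to $\psi^\ell$, the same formula computes $(f^\ell)^*$ with $\psi$ replaced by $\psi^\ell$, i.e.\ with $\wedge^k\psi$ replaced by $(\wedge^k\psi)^\ell$.

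Assuming this tracking of norms is tight, Gelfand's formula gives
\[
\lambda_k(f) \;=\; \lim_{\ell\to\infty}\bigl\|(f^\ell)^*\bigr\|^{1/\ell} \;=\; \rho\bigl(\wedge^k\psi\bigr) \;=\; \max_{i_1<\cdots<i_k}\bigl|\mu_{i_1}\cdots\mu_{i_k}\bigr| \;=\; |\mu_1\cdots\mu_k|,
\]
the last equality using the prescribed ordering $|\mu_1|\ge\cdots\ge|\mu_n|$. What remains, then, is to justify that the cohomological norm of $(f^\ell)^*$ really grows at the rate of $\rho(\wedge^k\psi)^\ell$.

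The main obstacle is exactly this step, since $f$ is typically not algebraically stable on a fixed model: $(f^\ell)^*\neq (f^*)^\ell$ on $H^{k,k}(X;\R)$, so the spectral radius of $f^*$ on a single toric variety can fall short of $\lambda_k(f)$. I would address this in two ways. For the upper bound $\lambda_k(f)\le |\mu_1\cdots\mu_k|$, I would use positivity in the toric setting: pullbacks send nef classes to nef classes, and intersection numbers of nef classes against a fixed ample class are controlled by the entries of $\psi^\ell$ and hence by $\|(\wedge^k\psi)^\ell\|$. The harder direction is the lower bound; here I would either (a) produce, by sufficient refinement of the fan, a toric model on which $f$ is algebraically stable for $H^{k,k}$, reducing the question to the spectral radius of a single operator; or (b) construct directly a positive test class in $H^{k,k}$ aligned with the top eigenspace of $\wedge^k\psi$, verify that iterated pullbacks remain aligned with this eigenspace modulo lower-order terms, and read off the growth rate $|\mu_1\cdots\mu_k|^\ell$. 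Establishing this alignment and controlling the boundary corrections from indeterminacy is the technical heart of the argument.
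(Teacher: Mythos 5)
Your overall strategy --- reduce to the spectral radius of $\wedge^k\psi$ and apply Gelfand's formula, using the fact that $f^\ell$ is the monomial map of $\psi^\ell$ --- is exactly the paper's, but your proposal leaves unproved the one step that makes this work, and the workarounds you sketch in its place are both unnecessary and substantially harder. The paper's proof rests on an exact formula (Theorem \ref{thm:pullback_on_P1n}): on the model $X=(\P^1)^n$, in the natural basis $\{c_\alpha\}$ of $A^k(X)\otimes_\Z\R\cong H^{k,k}(X;\R)$, the matrix of $f^*$ has $(\alpha,\beta)$-entry $\bigl|\det(\psi_{\beta',\alpha'})\bigr|$ --- the entrywise absolute value of the matrix of $\transp{(\wedge^{k}\psi)}$, with no ``lower-order boundary contributions'' at all. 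Proving this is the technical heart of the argument: one resolves $f$ by a refinement $\tilde{\fan}$, pulls back by $\tilde{f}$ using the Fulton--Sturmfels formula for Minkowski weights, pushes forward by $\pi$ via the projection formula and the fan displacement rule, and evaluates the resulting lattice indices through two determinant lemmas (Lemmas \ref{lem:first} and \ref{lem:second}). Your proposal gestures at this computation but does not carry it out, and explicitly defers ``establishing this alignment and controlling the boundary corrections'' --- which is precisely the content of the proof.

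Once the exact formula is in hand, your concerns about algebraic stability evaporate: one never takes powers of the single operator $f^*$. Since $f^\ell$ is the monomial map of $\psi^\ell$, the formula applied to $\psi^\ell$ gives the matrix of $(f^\ell)^*$ directly, with entries $\bigl|\det((\psi^\ell)_{\beta',\alpha'})\bigr|$; taking the sup norm of the entries kills the absolute values, so $\|(f^\ell)^*\|=\|\wedge^{k}(\psi^\ell)\|=\|(\wedge^{k}\psi)^\ell\|$ exactly, and $\lambda_k(f)=|\mu_1\cdots\mu_k|$ follows with no positivity argument, no stable model, and no test class. The alternatives you propose are genuinely problematic as stated: a fixed toric model on which all iterates are stable in codimension $k$ need not exist (this is the subject of separate work of Jonsson--Wulcan and of Favre--Wulcan), and pullback of nef classes under rational maps is delicate. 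So the gap is concrete: you must prove the exact pullback formula on some model before the Gelfand step is justified.
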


This answers Question 9.9 in \cite{HP}.
As a corollary, we also obtain the following formula for the topological entropy of monomial maps,
which answers Question 9.1 in \cite{HP}.

\begin{cor*}
The topological entropy of a dominant monomial map $f$ is
\[
\htop(f)= \sum_{|\mu_i|>1}\log|\mu_i|.
\]
\end{cor*}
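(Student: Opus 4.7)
The plan is to sandwich $\htop(f)$ between matching upper and lower bounds by combining the Main Theorem with standard results in entropy theory.

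By the Main Theorem (with the convention $\lambda_0 = 1$), one has $\lambda_k(f) = |\mu_1 \cdots \mu_k|$, and since the eigenvalues are ordered by decreasing modulus, the ratio $\lambda_{k+1}(f)/\lambda_k(f) = |\mu_{k+1}|$ is at least one exactly while $|\mu_{k+1}| \ge 1$. Hence the sequence $\lambda_k(f)$ increases through the index $k_0 = \#\{i : |\mu_i| > 1\}$ and decreases thereafter, so
\[
\max_{0 \le k \le n} \lambda_k(f) \;=\; \prod_{|\mu_i| > 1} |\mu_i|,
\]
whose logarithm is the right-hand side of the corollary. It therefore suffices to prove $\htop(f) = \log \max_k \lambda_k(f)$.

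For the upper bound, I would realize $f$ as an equivariant dominant meromorphic self-map of a smooth projective toric compactification $X$ of $(\C^*)^n$ and invoke the Dinh--Sibony inequality, which asserts $\htop(g) \le \log \max_k \lambda_k(g)$ for every dominant meromorphic self-map $g$ of a compact K\"ahler manifold. For the lower bound, I would restrict $f$ to the compact real torus $(S^1)^n \subset (\C^*)^n$, which is invariant because every defining monomial has modulus one when the $x_j$ do. The restriction is a continuous group endomorphism of $(S^1)^n \cong \R^n/\Z^n$ whose lift to $\R^n$ is the matrix $\psi$, and Bowen's classical formula for toral endomorphisms yields
\[
\htop\bigl(f|_{(S^1)^n}\bigr) \;=\; \sum_{|\mu_i| > 1} \log|\mu_i|.
\]

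The main obstacle will be to glue the two halves together, namely to justify the monotonicity $\htop(f) \ge \htop(f|_{(S^1)^n})$ for the meromorphic map $f$ on $X$. Since $(S^1)^n$ lies inside the open locus on which every iterate of $f$ is holomorphic, this should be immediate from any reasonable definition of topological entropy for meromorphic maps, but it warrants an explicit remark or a citation to the Dinh--Sibony framework so that the compact invariant set $(S^1)^n$ furnishes a genuine lower bound on $\htop(f)$. Once this is in place, the two estimates coincide and the corollary follows.
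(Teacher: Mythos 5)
Your proposal is correct and follows essentially the same route as the paper: the upper bound comes from the Dinh--Sibony inequality $\htop(f)\le\max_k\log\lambda_k(f)$ combined with the Main Theorem's formula for $\lambda_k$, and the lower bound comes from the invariant compact torus $(S^1)^n$, which is precisely the content of the Hasselblatt--Propp theorem that the paper cites rather than reproves. The only difference is that you unpack that citation explicitly (via the toral-endomorphism entropy formula and monotonicity of entropy on a compact invariant set in the holomorphy locus), which is a legitimate and complete substitute.
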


The theory of toric varieties is a useful tool for understanding the dynamics of monomial maps,
as was shown in the papers (\cite{Fa, JW, ljl}). Using the intersection theory on toric varieties,
we are able to translate the computation of cohomology classes into much simpler computation
involving cones and lattice points.

We note that C. Favre and E. Wulcan \cite{FW} have obtained the same results on dynamical degrees using related but
different methods from ours.

In Section 2, we will review results from the intersection theory of toric varieties. Then
a method to pull back cohomology classes by monomial maps on toric varieties is developed in Section 3.
The method is applied in Section 4 to
give a concrete formula for pulling back cohomology classes by
monomial maps on $(\P^1)^n$. We use the formula to prove our
main result about dynamical degrees.
In the last section, we apply our method to compute the degrees of the Cremona involution $J$ on $\P^n$.

I am heartily thankful to my advisor Eric Bedford for his frequent conversation, encouragement and guidance.
I would also like to thank Julie D\'eserti and Tuyen Trung Truong for helpful discussions.


\section{Intersection theory on toric varieties}

In this section, we briefly review the intersection theory on toric varieties.
Results are stated without proof.
For more details, see \cite{FuSt}.

\subsection{Chow homology groups of toric varieties}

Following the notations of \cite{Fu, FuSt}, we let
$X=X(\fan)$ be a toric variety corresponding to a fan $\fan$ in a lattice $N$ of rank $n$.
Each cone $\tau\in\fan$ corresponds to an orbit $O_\tau$ of the torus action, and the
closure of $O_\tau$ in $X(\fan)$ is denoted by $V(\tau)$.
Each $V(\tau)$ is invariant under the torus action. Conversely,
every torus-invariant closed subvariety of $X$ is of the form $V(\tau)$.
The dimension of $V(\tau)$ is given by
\[
\dim(V(\tau))=n-\dim(\tau)=\codim(\tau).
\]
For each cone $\tau$, define $N_\tau$ to be the sublattice of $N$ generated by $\tau\cap N$,
and $M(\tau):=\tau^\bot\cap M$. Every nonzero $u\in M(\tau)$ determines a rational function
$\chi^u$ on $V(\tau)$. The divisor of $\chi^u$ is given by
\begin{equation}
\label{eq:chow_relation}
\Div(\chi^u)=\sum_\sigma \langle u, n_{\sigma,\tau} \rangle \cdot V(\sigma),
\end{equation}
where the sum is over all cones $\sigma\in\fan$ which contain $\tau$ with $\dim(\sigma)=\dim(\tau)+1$,
and $n_{\sigma,\tau}$ is a lattice point in $\sigma$ whose image generates the 1-dimensional lattice
$N_\sigma/N_\tau$.

For an arbitrary variety $X$, we define the {\em Chow group} $A_k(X)$ as the
group generated by the $k$-dimensional irreducible closed subvarieties of $X$, with
relations generated by divisors of nonzero rational function on some $(k+1)$-dimensional
subvariety of $X$. In the case of toric varieties, there is a nice presentation of
the Chow groups in terms of torus-invariant subvarieties and torus-invariant relations.

\begin{prop}
\mbox{}\label{prop:ch_homology}
\begin{itemize}
\item[(a)] The Chow group $A_k(X)$ of a toric variety $X$ is generated by the classes
$[V(\sigma)]$ where $\sigma$ runs over all cones of codimension $k$ in the fan $\fan$.
\item[(b)] The group of relations on these generators is generated by all relations (\ref{eq:chow_relation}),
where $\tau$ runs over cones of codimension $k+1$ in $\fan$, and $u$ runs over a
generating set of $M(\tau)$.
\end{itemize}
\end{prop}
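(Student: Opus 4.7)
The plan is to exploit the stratification of $X$ by torus orbits, together with the right-exact localization sequence
\[
A_k(Y) \to A_k(X) \to A_k(X\setminus Y) \to 0
\]
valid for any closed subvariety $Y\subset X$. I rely on two standard inputs: each orbit $O_\tau$ is $T$-equivariantly isomorphic to $(\C^*)^{\codim\tau}$, and the Chow groups of $(\C^*)^m$ are $A_m=\Z$ (generated by the fundamental class) with $A_k=0$ for $k<m$.

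For part (a), I would order the cones by decreasing dimension and peel off the corresponding orbits one at a time. Concretely, let $X_{>j}$ denote the union of the orbit closures $V(\tau)$ with $\dim\tau>j$; the complement $X\setminus X_{>j}$ is open and $T$-invariant, and its boundary strata are added back one orbit $O_\tau$ at a time. At each step the localization sequence shows that the only new class that can appear in $A_k$ is $[V(\tau)]$ with $\codim\tau=k$, coming from $A_k(O_\tau)\cong\Z$. Pushing forward along the closed inclusions, one concludes that $A_k(X)$ is generated by the classes $[V(\sigma)]$ with $\codim\sigma=k$.

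For part (b), the relations in $A_k(X)$ come from divisors $[\Div(g)]$ of rational functions $g$ on arbitrary $(k+1)$-dimensional subvarieties $W\subset X$. The task is to show that every such relation is an integer combination of the torus-invariant relations (\ref{eq:chow_relation}), namely $\Div(\chi^u)$ for $u\in M(\tau)$ on $V(\tau)$ with $\codim\tau=k+1$. Once this reduction is established, formula (\ref{eq:chow_relation}) automatically expresses each such relation in the generators from part (a), and one still needs to check that as $u$ ranges over a generating set of each $M(\tau)$ one captures all of them; this last point is linear and follows from the $\Z$-linearity of $u\mapsto\Div(\chi^u)$.

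The main obstacle is precisely the reduction to torus-invariant generators in part (b). I would attack it by a moving argument using the $T$-action: given a pair $(W,g)$, a generic one-parameter subgroup $\lambda:\C^*\to T$ degenerates $W$, in a flat family over $\C^*$ compactified at $0$, to a $T$-invariant limit cycle supported on orbit closures $V(\tau)$ with $\codim\tau=k+1$; the pullback of $g$ degenerates correspondingly to characters on the limit components. Continuity of rational equivalence in this family then rewrites the original relation $[\Div(g)]=0$ as a sum of relations of the form (\ref{eq:chow_relation}). Making this specialization precise — in particular verifying that the degeneration of $g$ lives in the character lattice $M(\tau)$ of each limit component, and that every generator of $M(\tau)$ is realized — is the delicate step, and is where I expect most of the work to go.
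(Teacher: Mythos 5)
The paper offers no proof of this proposition: it is quoted from Fulton--Sturmfels and the text simply points to \cite[Proposition 1.1]{FuSt}, whose part (b) in turn rests on the theorem of Fulton, MacPherson, Sottile and Sturmfels on Chow groups of varieties with a solvable group action. Your plan follows exactly that standard route, so there is no divergence of method to report; the question is only whether your sketch closes the argument. Part (a) essentially does: the inputs $A_k((\C^*)^m)=0$ for $k<m$, $A_m((\C^*)^m)=\Z$, and the right-exact localization sequence are correct, and the induction over the orbit stratification yields the invariant generators. (It is cleaner to induct on the dimension of the toric variety, with $Y=\bigcup_\rho V(\rho)$ closed and the dense torus open, so that each stratum you treat is genuinely closed in the open set under consideration, but this is routine.)

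Part (b) is where the gap lies, and the step you flag as delicate is the entire content of the statement. Two concrete problems. First, the reduction is misstated: a relation $[\Div(g)]$ coming from a non-invariant $W$ is a cycle supported on non-invariant subvarieties, so it cannot literally be an integer combination of the invariant cycles $\Div(\chi^u)$. What must actually be proved is that the kernel of the surjection $\bigoplus_{\codim\sigma=k}\Z\cdot[V(\sigma)]\to A_k(X)$ from part (a) is generated by the relations (\ref{eq:chow_relation}); equivalently, that any \emph{invariant} cycle rationally equivalent to zero lies in the subgroup generated by the $\Div(\chi^u)$. The localization sequence does not hand you this by induction over strata, because a cycle supported on a closed invariant $Y$ that dies in $A_k(X)$ need not die in $A_k(Y)$. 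Second, the degeneration itself is subtle: the flat limit of $W$ under a generic one-parameter subgroup is a cycle with multiplicities on several orbit closures, and $g$ does not converge to a character on each component; the working device is to take the closure of the graph of the $\mathbb{G}_m$-orbit of $(W,g)$ in $X\times\P^1$ and extract the invariant relations from rational functions on that $(k+2)$-dimensional family. Making this precise is a theorem with a several-page proof (Fulton--MacPherson--Sottile--Sturmfels, \emph{Intersection theory on spherical varieties}), not a linear-algebra check. As written, your argument for (b) is a correct strategy with the decisive step missing; the honest fix is to supply that specialization argument in full or to cite it, as the paper does.
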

For the proof, see \cite[Proposition 1.1]{FuSt}.

\subsection{Chow cohomology of toric varieties; Minkowski weights}

For a complete toric variety $X$, it is known (\cite[Proposition 1.4]{FuSt}) that
the Chow cohomology group $A^k(X)$ is canonically
isomorphic to $\Hom(A_k(X),\Z)$. Thus we can describe
$A^k(X)$ as follows.

Let $\fan$ be a complete fan in a lattice $N$, and let $\fan^{(k)}$ denote the subset
of all cones of codimension $k$.
\begin{defn}
An integer-valued function $c$ on $\fan^{(k)}$ is called a {\em Minkowski weight} of codimension $k$
if it satisfies the relations
\begin{equation}
\sum_{\sigma\in\fan^{(k)},~\sigma\supset\tau} \langle u, n_{\sigma,\tau} \rangle \cdot c(\sigma)=0
\end{equation}
for all cones $\tau\in\fan^{(k+1)}$ and all $u\in M(\tau)$.
\end{defn}

\begin{prop}
The Chow cohomology group $A^k(X)$ of a complete toric variety $X=X(\fan)$ is canonically isomorphic
to the group of Minkowski weights of codimension $k$ on $\fan$.
\end{prop}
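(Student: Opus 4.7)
The plan is to combine the two ingredients already supplied in the excerpt: (i) the canonical isomorphism $A^k(X) \cong \Hom(A_k(X), \Z)$ for complete toric $X$ from \cite[Proposition 1.4]{FuSt}, and (ii) the torus-invariant presentation of $A_k(X)$ given by Proposition \ref{prop:ch_homology}. With these in hand, proving the proposition reduces to a transparent translation between homomorphisms on generators/relations and integer functions on $\fan^{(k)}$ satisfying linear constraints.

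First I would observe that, by Proposition \ref{prop:ch_homology}(a), a homomorphism $\varphi \in \Hom(A_k(X), \Z)$ is determined by the integers $c(\sigma) := \varphi([V(\sigma)])$ for $\sigma \in \fan^{(k)}$, so the datum of $\varphi$ is equivalent to the datum of an integer-valued function $c$ on $\fan^{(k)}$. Next, by Proposition \ref{prop:ch_homology}(b), such a $c$ extends to a well-defined homomorphism if and only if $\varphi$ annihilates every generating relation
\[
\Div(\chi^u) = \sum_{\sigma \supset \tau,\ \dim\sigma = \dim\tau + 1} \langle u, n_{\sigma,\tau}\rangle \cdot [V(\sigma)],
\]
for $\tau \in \fan^{(k+1)}$ and $u$ running over a generating set of $M(\tau)$. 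For $\tau \in \fan^{(k+1)}$, any $\sigma \supsetneq \tau$ with $\dim\sigma = \dim\tau + 1$ has codimension exactly $k$, so the sum ranges over $\{\sigma \in \fan^{(k)} : \sigma \supset \tau\}$. Applying $\varphi$ therefore yields exactly
\[
\sum_{\sigma \in \fan^{(k)},\ \sigma \supset \tau} \langle u, n_{\sigma,\tau}\rangle \, c(\sigma) = 0,
\]
which is the Minkowski weight condition. Since this identity is $\Z$-linear in $u$, demanding it on a generating set of $M(\tau)$ is equivalent to demanding it for every $u \in M(\tau)$, as in the definition.

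The correspondence $\varphi \leftrightarrow c$ is manifestly a group isomorphism, and its canonicity is inherited from the canonical nature of the two underlying isomorphisms. Frankly, there is no genuine obstacle here: the two cited ingredients do all the heavy lifting, with the real content sitting behind (i), which uses completeness of $X$ to identify $A^k(X)$ with the $\Z$-linear dual of $A_k(X)$. The only small bookkeeping point the write-up must verify is the matching of the index sets in the two displayed sums, which is the dimension count just noted; everything else is formal.
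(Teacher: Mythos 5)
Your proposal is correct and follows exactly the route the paper takes: the paper states the proposition as an immediate consequence of Proposition \ref{prop:ch_homology} together with the isomorphism $A^k(X)\cong\Hom(A_k(X),\Z)$, and you have simply spelled out the generator/relation translation that the paper leaves implicit. No differences in substance.
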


This proposition is an immediate consequence of the description of $A_k(X)$ in Proposition
\ref{prop:ch_homology} and the isomorphism
$A^k(X)\cong\Hom(A_k(X),\Z)$.

\subsection{Relations with the usual (co)homology groups}

For a nonsingular complete toric variety $X=X(\fan)$, we have $H_k(X;\Z)=0$ and $H^k(X;\Z)=0$ for
$k$ odd (see \cite[p.92]{Fu}). Furthermore, $A_k(X)\cong H_{2k}(X;\Z)$ and $A^k(X)\cong H^{2k}(X;\Z)$
(see \cite[p.102 and p.106]{Fu}).
This means
\[
A_*(X)\cong H_{*}(X;\Z)\text{ and } A^*(X)\cong H^*(X;\Z),
\]
and both isomorphisms double the degree.

Moreover, suppose $X$ is also projective, thus K\"{a}hler. We have the Hodge decomposition
$H^k(X;\C)\cong \oplus_{p+q=k} H^{p,q}(X)$.
The following result is known for toric varieties (\cite[Proposition 12.11]{Dan}):
\[
H^{p,q}(X)\cong H^q(X,\Omega^p_X) = 0 \text{ for $p\ne q$.}
\]
As a consequence, we have $A^k(X)\otimes_\Z \R \cong H^{2k}(X;\R)\cong H^{k,k}(X;\R)$.


\section{Pulling back cohomology classes by a toric rational map}
\label{sec:pullback}

Let $X$ be a compact K\"{a}hler manifold of dimension $n$, and $f:X\dashrightarrow X$
be a dominant meromorphic map. For a smooth form $\theta$ on $X$, one defines the
{\em inverse image} of $\theta$ by $f$, denoted $f^*\theta$, as follows:
let $\Gamma_f\subset X\times X$ be the graph of $f$, and let $\tilde{\Gamma}_f$ be a
desingularization of $\Gamma_f$. One has the following commutative diagram
\[
\xymatrix{
& \tilde{\Gamma}_f\ar[ld]_{\pi_1}\ar[rd]^{\pi_2}\\
X\ar@{-->}[rr]_f & & X
}
\]
here $\pi_1,\pi_2$ are both holomorphic maps. We define
\[
f^*\theta = (\pi_1)_*(\pi_2^*\theta).
\]
This means, we interpret $\pi_2^*\theta$ as a current, and then push it forward by $(\pi_1)_*$.
The definition above does not depend on the choice of the desingularization
of $\Gamma_f$. Furthermore, $f^*$ induces a map on cohomology groups.

Although simple and intuitive, this definition is not easy to compute in general.
In the following, we provide a method to compute the pull back of cohomology classes
on toric varieties by toric maps.

\subsection{Constructing a good refinement $\tilde{\fan}$ according to $\fan$ and $\psi$}
For the homomorphism $\psi:N\to N$,
we want to obtain a refinement $\tilde{\fan}$ of the fan $\fan$ such that every cone in
$\tilde{\fan}$ is mapped by $\psi_\R := \psi\otimes_\Z \R$ into some cone in $\fan$.
Moreover, we want the fan $\tilde{\fan}$ to be nonsingular, and the corresponding
toric variety $X(\tilde{\fan})$ to be projective. The existence and construction of
such a refinement $\tilde{\fan}$ is shown in \cite[Section 1 and Section 2.1]{JW}.

Therefore, we obtain the following diagram:
\[
\xymatrix{
& X(\tilde{\fan})\ar[ld]_{\pi}\ar[rd]^{\tilde{f}}\\
X(\fan)\ar@{-->}[rr]_f & & X(\fan)
}
\]
where $\tilde{f}$ and $\pi$ are both toric morphisms.
The morphism $\tilde{f}$ is induced by $\psi$, and $\pi$ is a birational morphism
induced by the identity map on $N$.

\subsection{Pulling back by $\tilde{f}$}
The map $\tilde{f}: X(\tilde{\fan})\to X(\fan)$ is a toric morphism. This means that
if we let $N_\R:= N\otimes_\Z \R$, then the corresponding
linear transformation $\psi_\R: N_\R\to N_\R$ maps each cone $\tau'\in\tilde{\fan}$ into some cone
$\tau\in\fan$, i.e., $\psi_\R(\tau')\subset\tau$. Now, let $c\in A^k(X(\fan))$ be a Minkowski weight of
codimension $k$ on $\fan$. Since $\tilde{f}$ is dominant, we have the following formula for the
pull back $\tilde{f}^* c$ as a Minkowski weight of codimension $k$ on $\tilde{\fan}$
(see \cite[Proposition 2.7]{FuSt}). For $\tau'\in\tilde{\fan}^{(k)}$, let $\tau$ be the smallest
cone in $\fan$ that contains $\psi(\tau')$, then
\[
(\tilde{f}^*c) (\tau')= \begin{cases} [N:\psi(N)+N_\tau]\cdot c(\tau) &   \text{if $\codim(\tau)=k$,} \\
                  0 &   \text{if $\codim(\tau) < k$.}  \end{cases}
\]

\subsection{Push forward by $\pi$}
\label{sec:proj_formula}
In order to describe $\pi_*(\tilde{f}^* c)\in A^k(X(\fan))$, it is enough to know the values
$(\pi_*\tilde{f}^* c)(\sigma)$ for $\sigma\in\fan^{(k)}$.
The Poincar\'{e} duality holds for $X(\fan)$. This means, for each $\sigma\in\fan^{(k)}$, there is the Poincar\'{e}
dual of $[V(\sigma)]\in A_k$, denoted by $c_\sigma\in A^{n-k}$, such that
$c_\sigma(\tau) = [V(\sigma)] . [V(\tau)]$ for $\tau\in\fan^{(n-k)}$.
Let $deg:A_0(X(\fan))\to\Z$ be the {\em degree homomorphism}\footnote
{In \cite{FuSt}, the degree homomorphism is denoted by $deg$, so we use the same notion.
But we will use $\deg$ for another meaning. Hopefully it will be clear from the
context and cause no confusion.}
on the complete variety $X(\fan)$. Then we have
\[
\begin{split}
(\pi_*\tilde{f}^* c)(\sigma) &= deg \Bigl( (\pi_*\tilde{f}^* c) \cap [V(\sigma)] \Bigr) \\
                                 &= \Bigl( (\pi_*\tilde{f}^* c) \cup c_\sigma \Bigr) (\{0\}) \\
                                 &= \Bigl( (\tilde{f}^* c) \cup (\pi^* c_\sigma) \Bigr) (\{0\}).
\end{split}
\]
The first equality is a special case of \cite[Proposition 3.1(a)]{FuSt}. The second and the third equalities
are due to Poincar\'{e} duality and the projection formula
of intersection theory \cite[p.325]{FuInt}.

Finally, the cup product $(\tilde{f}^* c) \cup (\pi^* c_\sigma)$ applying on the cone $\{0\}$ can be
computed using \cite[Proposition 3.1(b)]{FuSt}:
\[
\Bigl( (\tilde{f}^* c) \cup (\pi^* c_\sigma) \Bigr) (\{0\})
= \sum_{(\sigma,\tau)\in \tilde{\fan}^{(k)}\times \tilde{\fan}^{(n-k)}} m_{\sigma,\tau}
  \cdot (\tilde{f}^* c)(\sigma) \cdot (\pi^* c_\sigma)(\tau).
\]
In the formula, we fix some generic vector $v\in N$, the sum is over all pairs
$(\sigma,\tau)$ such that $\sigma$ meets $\tau+v$, and $m_{\sigma,\tau}=[N:N_\sigma+N_\tau]$.
Here ``generic'' means outside a union of finitely many proper linear subspaces of $N_\R$.
For the proof of this theorem, and the precise specification of $v$ being generic, see \cite[Section 3]{FuSt}.


\section{Dynamical degrees of a monomial map}

The goal of this section is to compute the dynamical degrees of monomial maps.
Since the dynamical degrees are birational invariants (see \cite[Corollary 2.7]{Guedj}, we can choose any
birational model to compute them. For monomial maps,
the toric variety $(\P^1)^n$ is a convenient model. We start
by investigating the fan structure, the homology and the cohomology of $(\P^1)^n$,
then use these information to compute the dynamical degree.

\subsection{The fan structure of $(\P^1)^n$}
Throughout this section, let $\fan$ be the standard fan for $X(\fan)=(\P^1)^n$, which is as follows.
First, let $e_1,\cdots,e_n$ be the standard basis elements of $N_\R\cong\R^n$, then
the rays of $\fan$ are generated by $e_1,\cdots, e_n$ and $-e_1,\cdots, -e_n$.
In general, the $k$-dimensional cones are generated by $\{s_1 e_{i_1},\cdots,s_k e_{i_k}\}$, where
$i_1,\cdots,i_k$ are $k$ different numbers in $\nn =\{1,\cdots,n\}$, and $s_i\in\{-1,+1\}$.

Thus, $k$ dimensional cones are in one-to-one correspondence with pairs $(\alpha,s)$, where
$\alpha\subseteq\nn$ is a subset with $k$ elements, and $s$ is a map $s:\alpha\to \{-1,+1\}$.
We denote $s(i)$ by $s_i$.
Under this correspondence, we can denote a cone by $\sigma=(\alpha,s)$, and this means the cone
of dimension $|\alpha|$, generated by $\{s_i\cdot e_i\ |\ i\in \alpha\}$.

\subsection{The Chow homology group of $(\P^1)^n$}
We know from Proposition~\ref{prop:ch_homology} that $A_k((\P^1)^n)$ is generated
by the classes of subvarieties $V(\sigma)$
for $\sigma=(\alpha,s)$ with $|\alpha|=n-k$. Concretely,
in the coordinate $([x_1 : y_1],\cdots,[x_n : y_n])$ of $(\P^1)^n$, we have
\[
V(\sigma)= \{x_i=0 | i\in\alpha,\ s_i=+1\} \cap\{y_j=0 | j\in\alpha,\ s_j=-1\}.
\]

For two cones
$\sigma_1=(\alpha_1,s_1)$ and $\sigma_2=(\alpha_2,s_2)$ of the same dimension,
the relations between the cohomology classes are given by
\begin{equation}
\label{eq:rel_P1n}
[V(\sigma_1)]=[V(\sigma_2)] ~\Longleftrightarrow~ \alpha_1=\alpha_2 ~\Longleftrightarrow~
\Span(\sigma_1)=\Span(\sigma_2).
\end{equation}
Thus, if $s^+\equiv +1$ and let
$\sigma_\alpha = (\alpha, s^+)$, then the classes $[V(\sigma_\alpha)]$ will be a basis for $A_k((\P^1)^n)$.
For $\sigma_1=(\alpha_1,s_1)$ and $\sigma_2=(\alpha_2,s_2)$ of complementary dimensions,
i.e., $|\alpha_1|+|\alpha_2|=n$,
we have the following:
\[
[V(\sigma_1)] . [V(\sigma_2)]= \begin{cases} 1 &   \text{if $\alpha_1\cap \alpha_2=\emptyset$,} \\
                  0 &   \text{if $\alpha_1\cap \alpha_2\ne\emptyset$.}  \end{cases}
\]
Note that, since $|\alpha_1|+|\alpha_2|=n$, we have $\alpha_1\cap \alpha_2=\emptyset$ if and only if
$\alpha_1\cup \alpha_2=\nn$. We use the notation
$\alpha' = \nn-\alpha$ for the complement of $\alpha$. Thus the classes $[V(\sigma_{\alpha'})]$, with $|\alpha|=n-k$,
form a basis of $A_{n-k}((\P^1)^n)$, which is dual to the basis $[V(\sigma_{\alpha})]$ of
$A_k((\P^1)^n)$ under the intersection pairing.

\subsection{The Chow cohomology groups of $(\P^1)^n$}
By (\ref{eq:rel_P1n}),
we conclude that a function $c: \fan^{(k)}\to\Z$ is a Minkowski weight if we have
$c(\sigma_1)=c(\sigma_2)$ whenever $\Span(\sigma_1)=\Span(\sigma_2)$. Since
the Chow cohomology $A^k((\P^1)^n)$ is isomorphic to the group of Minkowski weights $\fan^{(k)}\to\Z$,
we can write down a basis for $A^k((\P^1)^n)$ as follows.
For each subset $\alpha\subseteq\nn$ with $|\alpha|=n-k$, denote
$E_\alpha=\Span\{e_i | i\in \alpha\}$, and
define $c_\alpha: \fan^{(k)}\to\Z$ as
\[
c_\alpha (\sigma)= \begin{cases} 1 &   \text{if $\sigma\subset E_\alpha$,} \\
                  0 &   \text{if $\sigma\not\subset E_\alpha$.}  \end{cases}
\]
These $c_\alpha$ form a basis for $A^k((\P^1)^n)$, and $c_\alpha$ is the
Poincar\'{e} dual to $[V(\sigma_{\alpha'})]$.

\subsection{Pulling back cohomology classes by monomial maps on $(\P^1)^n$}
Recall that $\psi\in M_n(\Z)$ is an integer matrix with nonzero determinant.
For any subsets
$\alpha, \beta \subseteq \nn$, we let
$\psi_{\alpha,\beta}$ denote the submatrix of $\psi$ obtained by the rows in
$\alpha$ and the columns in $\beta$. This means, if we write $\psi=(a_{i,j})$, then
$\psi_{\alpha,\beta}$ is formed by those $a_{i,j}$ with $i\in\alpha$ and $j\in\beta$.

\begin{thm}
\label{thm:pullback_on_P1n}
Let $f$ be the dominant monomial map of $(\P^1)^n$ induced by $\psi$. Then
the map $f^* : A^k((\P^1)^n)\to A^k((\P^1)^n)$
is represented, with respect to the basis $\{c_\alpha | \alpha\subset\nn,|\alpha|=n-k\}$
for $A^k((\P^1)^n)$, by the matrix whose $(\alpha,\beta)$-entry is
\[
\bigl|\det(\psi_{\beta',\alpha'})\bigr|.
\]
\end{thm}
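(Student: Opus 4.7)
The plan is to apply the general pullback formula of Section \ref{sec:pullback} to the explicit basis $\{c_\alpha\}$ of Minkowski weights on $\fan$, and reduce the statement to a combinatorial lattice computation. Since $c_\alpha(\sigma_\gamma)=\delta_{\alpha,\gamma}$ whenever $|\alpha|=|\gamma|=n-k$ (because $\sigma_\gamma\subset E_\alpha$ forces $\gamma\subset\alpha$ and equality of sizes forces $\gamma=\alpha$), expanding $f^*c_\beta=\sum_\alpha m_{\alpha,\beta}\,c_\alpha$ and evaluating on $\sigma_\alpha$ shows that $m_{\alpha,\beta}=(f^*c_\beta)(\sigma_\alpha)$. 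It therefore suffices to establish the scalar identity $(f^*c_\beta)(\sigma_\alpha)=|\det\psi_{\beta',\alpha'}|$ for each pair $(\alpha,\beta)$.

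The next step is to invoke the factorization $f^*=\pi_*\circ\tilde f^*$ through a nonsingular projective refinement $\tilde\fan$ and the projection-formula identity of Section \ref{sec:proj_formula}. Since $c_{\alpha'}$ is the Poincar\'e dual of $[V(\sigma_\alpha)]$, that identity reads
\[
(f^*c_\beta)(\sigma_\alpha)=\bigl((\tilde f^*c_\beta)\cup(\pi^*c_{\alpha'})\bigr)(\{0\}).
\]
Writing $\Z^\beta$ for the sublattice $\Z\{e_i:i\in\beta\}\subset N$, the explicit formulas of Section \ref{sec:pullback} then yield that $(\tilde f^*c_\beta)(\sigma')$ equals the index $[N:\psi(N)+\Z^\beta]$, depending only on $\beta$, when $\psi_\R(\sigma')\subset E_\beta$, and vanishes otherwise; while $(\pi^*c_{\alpha'})(\tau')$ equals $1$ when $\tau'\subset E_{\alpha'}$ and vanishes otherwise. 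Both support conditions are determined purely by the cone of $\fan$ enclosing the image, independently of the chosen refinement.

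Applying the cup-product formula with a generic $v\in N$ then expresses
\[
(f^*c_\beta)(\sigma_\alpha)=[N:\psi(N)+\Z^\beta]\sum_{(\sigma',\tau')}[N:N_{\sigma'}+N_{\tau'}],
\]
the sum ranging over pairs in $\tilde\fan^{(k)}\times\tilde\fan^{(n-k)}$ with $\psi_\R(\sigma')\subset E_\beta$, $\tau'\subset E_{\alpha'}$, and $\sigma'\cap(\tau'+v)\ne\emptyset$. The remaining task---identifying this weighted count with $|\det\psi_{\beta',\alpha'}|/[N:\psi(N)+\Z^\beta]$, so that the product collapses to $|\det\psi_{\beta',\alpha'}|$---is what I expect to be the main obstacle. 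The strategy I would follow is to reread the sum geometrically: for generic $v$ the $(n-k)$-plane $\psi_\R^{-1}(E_\beta)$ and the $k$-plane $E_{\alpha'}+v$ meet transversely, and the lattice-index factors package the content of the composite
\[
\Z^{\alpha'}\hookrightarrow N\xrightarrow{\psi}N\twoheadrightarrow N/\Z^\beta\cong\Z^{\beta'},
\]
which in the standard bases is represented by the submatrix $\psi_{\beta',\alpha'}$. Multiplicativity of lattice indices along this composite, together with the basic identity $|\det\psi_{\beta',\alpha'}|=[\Z^{\beta'}:\psi_{\beta',\alpha'}(\Z^{\alpha'})]$ in the nonsingular case, should force the product to collapse to $|\det\psi_{\beta',\alpha'}|$; in the singular case the composite fails to have full rank, the transversality argument produces an empty sum, and both sides vanish.
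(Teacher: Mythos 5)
Your proposal is correct, and its skeleton coincides with the paper's: reduce the $(\alpha,\beta)$-entry to the scalar $(f^*c_\beta)(\sigma_\alpha)$, factor $f^*=\pi_*\circ\tilde f^*$ through a smooth projective refinement, apply the projection/cup-product formula with a generic translate $v$, and observe that the sum is empty or a single term according to whether $\psi_\R^{-1}(E_\beta)$ and $E_{\alpha'}+v$ are disjoint or meet in one point. Where you genuinely diverge is in the final lattice computation. The paper proves the resulting identity in two steps: Lemma~\ref{lem:first} (an exterior-algebra identity showing $\bigl|\det(\psi)\det((\psi^{-1})_{\alpha,\beta})\bigr|=\bigl|\det(\psi_{\beta',\alpha'})\bigr|$) and Lemma~\ref{lem:second} (an explicit computation with bases $v_i,w_i$ and parallelogram volumes showing the product of indices equals $\bigl|\det(\psi)\det((\psi^{-1})_{\alpha,\beta})\bigr|$). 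You instead bypass $\psi^{-1}$ entirely and read both index factors off the composite $\phi:\Z^{\alpha'}\hookrightarrow N\xrightarrow{\psi}N\twoheadrightarrow N/\Z^\beta\cong\Z^{\beta'}$, whose matrix is $\psi_{\beta',\alpha'}$; this is a cleaner and more conceptual route, and it does work: with $L=\psi^{-1}(E_\beta)\cap N=\ker(q\circ\psi|_N)$ one has the tower $\Z^{\beta'}\supseteq\mathrm{im}(q\circ\psi|_N)\supseteq\phi(\Z^{\alpha'})$, the first index is $[N:\psi(N)+\Z^\beta]$ and the second is $[N:L+\Z^{\alpha'}]$ by the isomorphism theorems, so their product is $[\Z^{\beta'}:\phi(\Z^{\alpha'})]=\bigl|\det\psi_{\beta',\alpha'}\bigr|$ in the nonsingular case, while the singular case gives an empty sum and a vanishing determinant. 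The only thing you should actually write out is precisely this two-step filtration and the identification of the second index with $[N:(\psi^{-1}E_\beta\cap N)+N_{\alpha'}]$ (noting that for a top-dimensional cone $\sigma'\subset\psi_\R^{-1}(E_\beta)$ of the smooth refinement one has $N_{\sigma'}=\psi^{-1}(E_\beta)\cap N$); your ``should force'' currently leaves that bookkeeping implicit, but it is routine and no idea is missing.
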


Note that with $\psi:\Z^n\to\Z^n$, one define the exterior power
\[
\wedge^{k} \psi: \wedge^{k} \Z^n \to \wedge^{k} \Z^n
\]
by sending $v_1\wedge\cdots\wedge v_{k}$ to $(\psi v_1)\wedge\cdots\wedge (\psi v_{k})$.
For $\alpha = \{\alpha_1 < \alpha_2 < \cdots < \alpha_{n-k} \} \subseteq \nn$, we define
$e_\alpha = e_{\alpha_1}\wedge\cdots\wedge e_{\alpha_{n-k}}$.
Then $\{ e_\alpha  |  \alpha\subseteq\nn, |\alpha|=n-k \}$ form a basis for $\wedge^{n-k} \Z^n$,
and $\{ e_{\alpha'}  |  \alpha\subseteq\nn, |\alpha|=n-k \}$ form a basis for $\wedge^{k} \Z^n$.
They are dual basis in the sense that $e_\alpha\wedge e_{\alpha'}=\pm 1$ and $e_\alpha\wedge e_{\beta'}=0$
for $\alpha\ne \beta$.
With the basis $e_{\alpha'}$, the linear map $\wedge^{k} \psi$ is represented by the matrix
$\bigl(\det(\psi_{\alpha',\beta'})\bigr)$, i.e.,
\[
(\wedge^{k}\psi)(e_{\beta'}) = \sum_{\alpha'\subset\nn,|\alpha'|=k} \det(\psi_{\alpha',\beta'})\cdot e_{\alpha'}.
\]
Thus the matrix in the theorem is obtained by
taking absolute value of every
entry of the matrix representing the map
$\transp{(\wedge^{k} \psi)}$ on $(\wedge^{k} \Z^n)^*\cong \wedge^{n-k} \Z^n$.

We will need the following lemma.

\begin{lem}
\label{lem:first}
With the notations above, we have
\[
\bigl|\det(\psi)\cdot\det((\psi^{-1})_{\alpha,\beta})\bigr|=\bigl|\det(\psi_{\beta',\alpha'})\bigr|.
\]
\end{lem}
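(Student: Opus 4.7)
The plan is to recognize the claimed equality as the absolute-value form of Jacobi's classical identity on complementary minors of an inverse matrix. In signed form, for an invertible $n\times n$ matrix $\psi$ and any subsets $\alpha,\beta\subseteq\nn$ of the same cardinality $k$,
\[
\det\bigl((\psi^{-1})_{\alpha,\beta}\bigr) \;=\; \frac{(-1)^{s(\alpha)+s(\beta)}}{\det(\psi)}\,\det\bigl(\psi_{\beta',\alpha'}\bigr),
\]
where $s(\alpha) := \sum_{i\in\alpha} i$. Multiplying through by $\det(\psi)$ and taking absolute values discards the sign factor and delivers the lemma at once, so the only real work is to establish Jacobi's identity itself.

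For a self-contained proof I would first reduce to the case $\alpha=\beta=\{1,\dots,k\}$ by a simultaneous permutation of rows (bringing the rows indexed by $\alpha$ to the top) and of columns (bringing those indexed by $\beta$ to the left). Each permutation multiplies both $\det(\psi)$ and the minor on the appropriate side by an explicit sign; since the final conclusion is about absolute values, there is no need to track these signs. After the reduction, write $\psi$ and $\psi^{-1}$ in block form
\[
\psi=\begin{pmatrix} P & Q \\ R & S \end{pmatrix}, \qquad \psi^{-1}=\begin{pmatrix} W & X \\ Y & Z \end{pmatrix},
\]
with $P$ of size $k\times k$, so that $(\psi^{-1})_{\alpha,\beta}=W$ and $\psi_{\beta',\alpha'}=S$. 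Assuming $S$ is invertible, the block-inversion formula gives $W=(P-QS^{-1}R)^{-1}$, while the Schur complement identity gives $\det(\psi)=\det(S)\det(P-QS^{-1}R)$. Multiplying these relations produces $\det(\psi)\cdot\det(W)=\det(S)$, and taking absolute values finishes the argument.

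The main obstacle is the degenerate case in which the lower-right block $S$ is singular, where the Schur complement expression for $W$ is not directly available. I would bypass this by a density argument: after rewriting $\psi^{-1}$ via $\det(\psi)\cdot\psi^{-1}=\operatorname{adj}(\psi)$, both sides of the claimed identity are polynomials in the entries of $\psi$, and the identity is already established on the nonempty Zariski-open locus where $S$ is invertible; hence it holds universally. Alternatively one can run the symmetric argument using the Schur complement of $P$, and at least one of the two blocks can always be made invertible by a further negligible permutation, yielding the identity in every case.
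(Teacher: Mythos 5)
Your proof is correct, but it takes a genuinely different route from the paper's. You identify the statement as Jacobi's complementary-minor identity and prove it by block-matrix algebra: after a permutation reduction you write $W=(P-QS^{-1}R)^{-1}$ and $\det(\psi)=\det(S)\det(P-QS^{-1}R)$, multiply to get $\det(\psi)\det(W)=\det(S)$, and dispose of the degenerate case where $S$ is singular by a Zariski-density argument. The paper instead computes directly in the exterior algebra: identifying $\wedge^n\R^n\cong\R$, it writes $\det((\psi^{-1})_{\alpha,\beta})=\pm\bigl((\wedge^{n-k}\psi^{-1})e_\beta\bigr)\wedge e_{\alpha'}$, applies $\wedge^n\psi$ (which is multiplication by $\det\psi$), and reads off $\pm\, e_\beta\wedge\bigl((\wedge^{k}\psi)e_{\alpha'}\bigr)=\pm\det(\psi_{\beta',\alpha'})$. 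The exterior-algebra computation is uniform --- no case split on invertibility of a block, no genericity or limiting argument --- and it has the side benefit of introducing $\wedge^{k}\psi$, which the paper reuses immediately afterwards to identify the action of $f^*$ with the entrywise absolute value of $\transp{(\wedge^{k}\psi)}$ and hence to compute the dynamical degrees. Your approach is more elementary and self-contained, at the cost of the extra degenerate-case analysis; one point worth making explicit there is that the identity you extend by density is polynomial only after clearing the full power of the determinant, i.e.\ in the form $\det\bigl((\operatorname{adj}\psi)_{\alpha,\beta}\bigr)=\pm\det(\psi)^{|\alpha|-1}\det(\psi_{\beta',\alpha'})$, which follows from $\det\bigl((\psi^{-1})_{\alpha,\beta}\bigr)=\det(\psi)^{-|\alpha|}\det\bigl((\operatorname{adj}\psi)_{\alpha,\beta}\bigr)$; as stated, ``both sides are polynomials'' glosses over this normalization.
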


\begin{proof}
We first identify $\wedge^n\R^n\cong\R$ by sending $(e_1\wedge\cdots\wedge e_n)\mapsto 1$.
Then we have
\[
\det((\psi^{-1})_{\alpha,\beta}) = \pm \bigl((\wedge^{n-k}\psi^{-1})e_\beta\bigr)\wedge e_{\alpha'}.
\]
Multiplying by $\det(\psi)$ corresponds to applying $(\wedge^n\psi)$ on $\wedge^n\R^n$. Therefore,
\[
\begin{split}
\det(\psi)\cdot\det((\psi^{-1})_{\alpha,\beta})
&= \pm (\wedge^n\psi) \Bigl(\bigl((\wedge^{n-k}\psi^{-1})e_\beta\bigr)\wedge e_{\alpha'}\Bigr)\\
&= \pm e_\beta \wedge \bigl((\wedge^{k}\psi) e_{\alpha'}\bigr) \\
&= \pm \det(\psi_{\beta',\alpha'}).
\end{split}
\]
The lemma follows after we apply absolute value on both sides of the equation.
\end{proof}

\begin{proof}[Proof of Theorem \ref{thm:pullback_on_P1n}]
Throughout the proof, we let $\alpha,\beta$ be two proper subsets of $\n0$ with $|\alpha|=|\beta|=n-k$.
First, we have a projective, smooth resolution $X(\tilde{\fan})$
of $(\P^1)^n = X(\fan)$ with respect to the map $f$.
Recall that $f^* = \pi_* \circ \tilde{f}^*$.
If we write $N_\beta:=E_\beta\cap N$, then the pullback $\tilde{f}^*c_\beta$ is given by
\[
(\tilde{f}^*c_\beta) (\sigma)= \begin{cases} [N:\psi(N)+N_\beta]
                    &   \text{if $\psi(\sigma)\subset E_\beta$,} \\
                  0 &   \text{if $\psi(\sigma)\not\subset E_\beta$.}  \end{cases}
\]

Next, in order to find $\pi_* (\tilde{f}^*c_\beta)$,
we apply the projection formula method in section~\ref{sec:proj_formula} to $\sigma_\alpha\in\fan^{(k)}$:

\[
\begin{split}
(\pi_* (\tilde{f}^*c_\beta)) (\sigma_\alpha) &= (\tilde{f}^*c_\beta \cup \pi^* c_{\alpha'})(\{0\}) \\
&= \sum_{(\sigma,\tau)\in \tilde{\fan}^{(k)}\times \tilde{\fan}^{(n-k)}} m_{\sigma,\tau}
  \cdot (\tilde{f}^* c_\beta)(\sigma) \cdot (\pi^* c_{\alpha'})(\tau).
\end{split}
\]
Here the pullback $(\pi^* c_{\alpha'})$ is the Minkowski weight defined by
\[
(\pi^* c_{\alpha'}) (\tau)= \begin{cases} 1
                    &   \text{if $\tau\subset E_{\alpha'}$,} \\
                  0 &   \text{if $\tau\not\subset E_{\alpha'}$.}  \end{cases}
\]

Notice that $(\tilde{f}^* c_\beta)(\sigma)$ and $(\pi^* c_{\alpha'})(\tau)$ are both nonzero
if and only if $\sigma\subset \psi^{-1}(E_\beta)$ and $\tau\subset E_{\alpha'}$.
This means that we only need to look at the cones $\sigma$ and $\tau$
that are contained in the linear subspaces
$\psi^{-1}(E_\beta)$ and $E_{\alpha'}$, respectively.

Moreover, the sum is over all pairs
$(\sigma,\tau)$ such that $\sigma$ meets $\tau+v$ for a fixed generic $v\in N$. We know that, for a
generic $v$, the linear spaces $\psi^{-1}(E_\beta)$ and $E_{\alpha'}+v$
will either be disjoint or intersecting at one point. If they are disjoint, then the intersection
$(\psi^{-1}E_\beta)\cap E_{\alpha'}$ is of dimension $\ge 1$, and the vectors in the set
$\{~\psi^{-1}(e_i) | i\in\beta~\}\cup\{~e_j | j\in\alpha'~\}$ are linearly dependent.
This implies that the value $\det((\psi^{-1})_{\alpha,\beta})=0$,
hence $\bigl|\det(\psi_{\beta',\alpha'})\bigr| = \bigl|\det(\psi)\cdot\det((\psi^{-1})_{\alpha,\beta})\bigr| = 0$
by Lemma~\ref{lem:first}.
On the other hand, the sum is over the empty set, hence is also zero.

In the case that $\psi^{-1}(E_\beta)$ and $E_{\alpha'}+v$ intersect at one point for a fixed generic $v$, the
intersection point will be in the relative interior of cones,
and the sum contains exactly one term. In this case, we obtain the following:
\[
(\pi_* (\tilde{f}^*c_\beta)) (\sigma_\alpha) = [N: (\psi^{-1}E_\beta\cap N) + N_{\alpha'}] \cdot [N:\psi(N)+N_\beta].
\]
Note that the index $[N:\psi(N)+N_\beta]$ is always finite since we assume $\psi\otimes\R$ has full rank.
In the case that $\psi^{-1}(E_\beta)$ and $E_{\alpha'}+v$ intersect at one point, the index
$[N: (\psi^{-1}E_\beta\cap N) + N_{\alpha'}]$ is also finite.

Observe that the $(\alpha,\beta)$-entry of the matrix representing $f^*$ is exactly the number
$(\pi_* (\tilde{f}^*c_\beta)) (\sigma_\alpha)$.
By Lemma~\ref{lem:first}, the theorem will be true once we can prove the following:
\begin{lem}
\label{lem:second}
$[N: (\psi^{-1}E_\beta\cap N) + N_{\alpha'}] \cdot [N:\psi(N)+N_\beta]=|\det(\psi)\cdot\det((\psi^{-1})_{\alpha,\beta})|$.
\end{lem}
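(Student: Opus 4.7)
The plan is to package both indices on the left-hand side into a single linear map of lattices and to read them off via basic index-counting. I would introduce the composition
\[
\Phi \; : \; N \xrightarrow{\psi} N \twoheadrightarrow N / N_\beta \; \cong \; N_{\beta'},
\]
and note that the two indices appearing in Lemma~\ref{lem:second} are exactly the index of $\mathrm{im}(\Phi)$ in $N_{\beta'}$ and the index of $\Phi(N_{\alpha'})$ in $\mathrm{im}(\Phi)$.

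First I would compute the kernel and image of $\Phi$. Since $\psi(N)\subseteq N$ and $N_\beta = E_\beta\cap N$, one has $\ker\Phi = \psi^{-1}(E_\beta)\cap N$, and $\mathrm{im}(\Phi) = (\psi(N)+N_\beta)/N_\beta$, giving
\[
[N_{\beta'} : \mathrm{im}(\Phi)] \;=\; [N : \psi(N)+N_\beta].
\]
Restricting $\Phi$ to $N_{\alpha'}$ and using the bases $\{e_i : i\in \alpha'\}$ of $N_{\alpha'}$ and $\{\bar e_j : j\in \beta'\}$ of $N_{\beta'}$, the $i$-th column of $\Phi|_{N_{\alpha'}}$ is $\psi(e_i)$ with components in $\beta$ dropped, so the matrix of $\Phi|_{N_{\alpha'}}$ is precisely $\psi_{\beta',\alpha'}$. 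In the situation handled by the lemma, $\psi^{-1}E_\beta$ and $E_{\alpha'}$ are complementary subspaces of $N_\R$, hence $\det(\psi_{\beta',\alpha'})\neq 0$ and
\[
[N_{\beta'} : \Phi(N_{\alpha'})] \;=\; \bigl|\det(\psi_{\beta',\alpha'})\bigr|.
\]
The linking ingredient is the elementary isomorphism $\mathrm{im}(\Phi)/\Phi(A) \cong N/(A+\ker\Phi)$, valid for any subgroup $A\subseteq N$; applied with $A = N_{\alpha'}$ it yields
\[
[\mathrm{im}(\Phi):\Phi(N_{\alpha'})] \;=\; [N : N_{\alpha'} + (\psi^{-1}E_\beta\cap N)].
\]
Multiplying this by $[N_{\beta'}:\mathrm{im}(\Phi)]$ and matching the product with $[N_{\beta'}:\Phi(N_{\alpha'})] = |\det(\psi_{\beta',\alpha'})|$ produces
\[
[N:\psi(N)+N_\beta]\cdot [N : N_{\alpha'} + (\psi^{-1}E_\beta\cap N)] \;=\; \bigl|\det(\psi_{\beta',\alpha'})\bigr|,
\]
and Lemma~\ref{lem:first} then rewrites the right-hand side as $|\det(\psi)\cdot\det((\psi^{-1})_{\alpha,\beta})|$.

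The step I expect to need the most care is identifying the matrix of $\Phi|_{N_{\alpha'}}$ as $\psi_{\beta',\alpha'}$ and confirming that the non-degeneracy condition assumed by the lemma (so that this determinant is nonzero and the cokernel index formula applies) coincides with the case in which Lemma~\ref{lem:second} is actually invoked in the proof of Theorem~\ref{thm:pullback_on_P1n}. Both are more bookkeeping than substance, but they must be handled cleanly to avoid confusion between dual bases and sign conventions.
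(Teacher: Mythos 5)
Your proof is correct, but it takes a genuinely different route from the paper's. The paper works on the ``kernel side'': it applies the second isomorphism theorem to write $|\det(\psi)|=[N:\psi(N)+N_\beta]\cdot[N_\beta:\psi(N)\cap N_\beta]$, chooses an explicit basis $v_1,\dots,v_{n-k}$ of $\psi(N)\cap N_\beta$ with preimages $w_i=\psi^{-1}(v_i)$ spanning $\psi^{-1}E_\beta\cap N$, and reads off both remaining indices as volumes of fundamental parallelograms via the submatrix identity $(\hat w_1,\dots,\hat w_{n-k})=(\psi^{-1})_{\alpha,\beta}\cdot(\hat v_1,\dots,\hat v_{n-k})$. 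You instead package everything into the single composite $\Phi:N\xrightarrow{\psi}N\twoheadrightarrow N/N_\beta\cong N_{\beta'}$ and use the tower $\Phi(N_{\alpha'})\subseteq \mathrm{im}(\Phi)\subseteq N_{\beta'}$ together with multiplicativity of the index and the standard fact that an injective map of rank-$k$ lattices has image of index $|\det|$; all the identifications you flag (that $\ker\Phi=\psi^{-1}E_\beta\cap N$, that the matrix of $\Phi|_{N_{\alpha'}}$ is $\psi_{\beta',\alpha'}$, and that complementarity of $\psi^{-1}E_\beta$ and $E_{\alpha'}$ forces $\det(\psi_{\beta',\alpha'})\neq 0$, which matches exactly the case in which the lemma is invoked) are correct. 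A notable structural difference: your argument produces $|\det(\psi_{\beta',\alpha'})|$ directly --- which is the quantity actually needed in Theorem~\ref{thm:pullback_on_P1n} --- and only invokes Lemma~\ref{lem:first} at the very end to match the literal statement of Lemma~\ref{lem:second}; the paper's computation naturally lands on $|\det(\psi)\cdot\det((\psi^{-1})_{\alpha,\beta})|$ and needs Lemma~\ref{lem:first} to finish the theorem. So your approach is shorter, avoids the rational matrix $\psi^{-1}$ and the explicit choice of bases, and shows Lemma~\ref{lem:first} could in principle be bypassed, while the paper's version is more hands-on and makes the volume interpretation of each index explicit.
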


\begin{proof}[Proof of the lemma]
Without loss of generality, we assume that $\alpha=\beta=\{1,\cdots,n-k\}$.
First, observe that
\[
\begin{split}
|\det(\psi)| & = [N:\psi(N)] \\
             & = [N:\psi(N)+N_\beta] \cdot [\psi(N)+N_\beta:\psi(N)] \\
             & = [N:\psi(N)+N_\beta] \cdot [N_\beta:\psi(N)\cap N_\beta].
\end{split}
\]
The last equality is because $(\psi(N)+N_\beta) / \psi(N)$ is isomorphic to $N_\beta / (\psi(N)\cap N_\beta)$.

We observe that $\psi$ maps $(\psi^{-1}E_\beta)\cap N$ bijectively onto
$\psi(N)\cap N_\beta$, hence the two free abelian groups are isomorphic.
In particular, if $v_i = \transp{(a_{1,i},\cdots,a_{n-k,i},0,\cdots,0)}$, $i=1,\cdots,n-k$, is a basis of the
free abelian group $\psi(N)\cap N_\beta$, then $w_i=\psi^{-1}(v_i)$, $i=1,\cdots, n-k$
is a basis of $\psi^{-1}E_\beta\cap N$. If $w_i = \transp{(b_{1,i},\cdots,b_{n,i})}$,
we have the following matrix equation:
\begin{equation}
\label{eq:matrix_prod}
( w_1,\cdots,  w_{n-k}) = \psi^{-1}\cdot ( v_1,\cdots,  v_{n-k}).
\end{equation}
Here $( w_1,\cdots,  w_{n-k})$ and $( v_1,\cdots,  v_{n-k})$ denote the $n\times(n-k)$ matrices whose columns are formed
by the column vectors $w_1,\cdots,  w_{n-k}$ and $v_1,\cdots,  v_{n-k}$, respectively.
The bottom $k$ rows of the matrix $( v_1,\cdots,  v_{n-k})$ are all zero, and we
denote $\hat{v}_i = \transp{(a_{1,i},\cdots,a_{n-k,i})}$. Let $(\psi^{-1})_{\cdot,\beta}$ be the matrix formed by
the columns in $\beta$, i.e., the first $n-k$ columns under our assumption that $\beta=\{1,\cdots,n-k\}$.
Thus the right hand side of (\ref{eq:matrix_prod}) is determined by the product
$(\psi^{-1})_{\cdot,\beta} \cdot ( \hat{v}_1,\cdots,  \hat{v}_{n-k})$, and
we can reduce the matrix equation (\ref{eq:matrix_prod}) to
\begin{equation}
\label{eq:matrices}
( w_1,\cdots,  w_{n-k}) = (\psi^{-1})_{\cdot,\beta} \cdot ( \hat{v}_1,\cdots,  \hat{v}_{n-k}).
\end{equation}
Moreover, let $\hat{w}_i = \transp{(b_{1,i},\cdots,b_{n-k,i})}$, then we have an equation of submatrices
of (\ref{eq:matrices}):
\begin{equation}
\label{eq:sub_matrices}
( \hat{w}_1,\cdots,  \hat{w}_{n-k}) = (\psi^{-1})_{\alpha,\beta} \cdot ( \hat{v}_1,\cdots,  \hat{v}_{n-k}).
\end{equation}

The index $[N_\beta:\psi(N)\cap N_\beta]$
is equal to the
$(n-k)$-dimensional volume of the parallelogram formed by a basis of the free abelian subgroup
$(\psi(N)\cap N_\beta)\subseteq N_\beta$. By our assumption, $v_1,\cdots, v_{n-k}$ is such a basis, and
its $(n-k)$-dimensional volume is $|\det( \hat{v}_1,\cdots,  \hat{v}_{n-k}) |$.

For the index $[N: (\psi^{-1}E_\beta\cap N) + N_{\alpha'}]$,
a basis of $(\psi^{-1}E_\beta\cap N) + N_{\alpha'}$ is given by
\[
\{~w_1, \cdots, w_{n-k}, e_{n-k+1},\cdots,e_n ~\}.
\]
The volume of the associated parallelogram is then $|\det( \hat{w}_1,\cdots,  \hat{w}_{n-k} )|$.

By taking the determinant and absolute values on both sides of the equation (\ref{eq:sub_matrices}), we get
\[
[N: (\psi^{-1}E_\beta\cap N) + N_{\alpha'}] = |\det((\psi^{-1})_{\alpha,\beta})|\cdot [N_\beta:\psi(N)\cap N_\beta].
\]
We conclude that
\[
\begin{split}
 & [N: (\psi^{-1}E_\beta\cap N) + N_{\alpha'}] \cdot [N:\psi(N)+N_\beta] \\
=& [N: (\psi^{-1}E_\beta\cap N) + N_{\alpha'}] \cdot  \frac{|\det(\psi)|}{[N_\beta:\psi(N)\cap N_\beta]}\\
=& |\det((\psi^{-1})_{\alpha,\beta})|\cdot |\det(\psi)|.
\end{split}
\]
This proves the lemma.
\end{proof}
Therefore, by Lemma~\ref{lem:first} and Lemma~\ref{lem:second},
the proof of Theorem~\ref{thm:pullback_on_P1n} is complete.
\end{proof}

\subsection{Dynamical degrees of monomial maps}
Now we are ready to prove Theorem~\ref{thm:main}.

\begin{proof}[Proof of Theorem 1]
We will use $(\P^1)^n$ as our birational model to compute the dynamical degrees.
The iterate $f^\ell$ is induced by $\psi^\ell$.
With respect to the basis $\{c_\alpha | \alpha\subseteq\nn, |\alpha|=n-k\}$ for $A^k((\P^1)^n)$,
the space $\End(A^k((\P^1)^n))$ is isomorphic to $M_p(\Z)$ for $p$ equal to the binomial coefficient
$C(n,k)$. Let $\|\cdot\|$ be the norm defined by taking the maximum absolute value of coefficients. We have
\[
\begin{split}
   \lambda_k(f) & = \lim_{\ell\to\infty} \|(f^\ell)^*\|^{1/\ell} \\
    & = \lim_{\ell\to\infty} \Bigl\{\max_{\alpha,\beta}~\bigl|\det((\psi^\ell)_{\beta',\alpha'})\bigr|\Bigr\}^{1/\ell}\\
    & = \lim_{\ell\to\infty} \Bigl\| \wedge^{k} (\psi^{\ell})\Bigr\|^{1/\ell} \\
    & = \lim_{\ell\to\infty} \Bigl\|(\wedge^{k} \psi)^{\ell}\Bigr\|^{1/\ell} \\
    & = |\mu_1\cdots\mu_k|.
\end{split}
\]
This completes the proof.
\end{proof}

\begin{cor*}
The topological entropy of a birational monomial map $f$ is given by the following formula
\[
\htop(f)= \sum_{|\mu_i|>1}\log|\mu_i|.
\]
\end{cor*}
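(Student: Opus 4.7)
The plan is to establish matching upper and lower bounds on $\htop(f)$. For the upper bound, I would invoke the Dinh--Sibony entropy inequality, which is the meromorphic generalization of the Gromov--Yomdin theorem: for any dominant meromorphic self-map of a compact K\"ahler manifold,
\[
\htop(f)\ \le\ \log\max_{0\le k\le n}\lambda_k(f).
\]
Using the main theorem we have $\lambda_k(f)=|\mu_1\cdots\mu_k|$, and because this product is nondecreasing in $k$ so long as $|\mu_k|\ge 1$ and nonincreasing once $|\mu_k|<1$, the maximum over $k$ equals $\prod_{|\mu_i|>1}|\mu_i|$. Taking logarithms yields $\htop(f)\le\sum_{|\mu_i|>1}\log|\mu_i|$.

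For the lower bound, I would restrict $f$ to the compact real subtorus $T^n=(S^1)^n\subset(\C^*)^n$. Since the exponents of $f$ are integers, $f$ preserves $T^n$, and via the identification $T^n\cong\R^n/\Z^n$ the restriction $f|_{T^n}$ is exactly the group endomorphism induced by $\psi$. Its topological entropy is given by the classical formula $\htop(f|_{T^n})=\sum_{|\mu_i|>1}\log|\mu_i|$ for toral endomorphisms, which one can also verify directly by counting $(n,\epsilon)$-separated sets via the Jordan form of $\psi$. Because $T^n$ is a compact $f$-invariant subset lying entirely in the open chart $(\C^*)^n$ on which $f$ is holomorphic, monotonicity of topological entropy under restriction to a closed invariant subset gives $\htop(f)\ge\htop(f|_{T^n})$.

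Combining the two bounds yields the claimed equality. The delicate step is the upper bound, as Dinh--Sibony's inequality for meromorphic maps requires careful handling of the indeterminacy locus and of the algebraic-stability issue; however, it is available in the needed generality, and together with the birational invariance of the $\lambda_k$ it applies here with no extra work. The lower bound is by contrast a closed-system computation on a smooth compact manifold disjoint from the indeterminacy set, and presents no real obstacle.
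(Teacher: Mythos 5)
Your proof is correct and follows essentially the same route as the paper: the upper bound via the Dinh--Sibony inequality $\htop(f)\le\max_k\log\lambda_k(f)$ combined with the main theorem, and the lower bound via the restriction of $f$ to the invariant compact real torus $(S^1)^n$, which is precisely the content of the Hasselblatt--Propp result the paper cites for the ``$\ge$'' direction. The only difference is that you spell out that cited lower bound explicitly rather than quoting it.
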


\begin{proof}
The ``$\ge$'' part is due to Hasselblatt and Propp (see \cite[Theorem 5.1]{HP}). For the ``$\le$'' part,
by \cite[Theorem 1]{DS},
\[
\htop(f)\le \max_{1\le k\le n}\log\lambda_k(f).
\]
For monomial maps, the number $\sum_{|\mu_i|>1}\log|\mu_i|$ is exactly the maximum
on the right side of the inequality. Thus the corollary is proved.
\end{proof}

Next, we consider monomial maps on $\P^n$.
Since $A^k(\P^n)\cong H^{2k}(\P^n;\Z)\cong\Z$ for all $k$,
the pull back $f^*$ of any rational map $f$ is given by an integer, called the $k$-th degree of $f$,
and is denoted by $\deg_k(f)$ (see Russakovskii and Shiffman \cite[p.914]{RS}).

Recall that the norm of $(f^\ell)^* := (f^\ell)^*|_{A^k((\P^1)^n)}$ is
\[
\|(f^\ell)^*\|=\|(\wedge^{k} \psi)^{\ell}\|,
\]
and the number $|\mu_1\cdots\mu_k|$ is the spectral radius of the map $(\wedge^{k} \psi)$.
By a standard result of linear algebra (see \cite[Lemma 6.4]{ljl}), we know that there exist two positive constants
$c_1\ge c_0 > 0$, and an integer $m$ with $0\le m\le C(n,k)$, such that
\[
c_0\cdot \ell^m \cdot |\mu_1\cdots\mu_k|^\ell \le \|(f^\ell)^*\| \le c_1\cdot \ell^m \cdot |\mu_1\cdots\mu_k|^\ell.
\]

Let $g: (\P^1)^n \to \P^n$ and $g^{-1}:\P^n\to(\P^1)^n$ be the birational map induced by identity map on the lattice $N$.
Then $f$ is birationally conjugate to the map $\tilde{f} := g\circ f \circ g^{-1}$ on $\P^n$.
The pullback map $(\tilde{f}^\ell)^*$ on $H^{k,k}(\P^n;\R)$ is represented by
the $k$-th degree  of $\tilde{f}^\ell$.
By a standard estimate on birational conjugation (see \cite[Corollary 2.7]{Guedj}),
we know that
\[
c'_0\cdot \|(f^\ell)^*\| \le \deg_k(\tilde{f}^\ell) \le c'_1\cdot \|(f^\ell)^*\|
\]
for some constants $c'_1\ge c'_0 > 0$. Combining the two estimates, we conclude the following estimate on the
growth of $k$-th degree on $\P^n$.

\begin{prop}
For a dominant monomial map $\tilde{f}$ on $\P^n$, there are two constants $C_1\ge C_0 > 0$
and an integer $m$ with $0\le m\le C(n,k)$, such that
the $k$-th degree of iterates of $\tilde{f}$ satisfies the following estimate:
\[
C_0\cdot \ell^m \cdot |\mu_1\cdots\mu_k|^\ell \le \deg_k(\tilde{f}^\ell) \le C_1\cdot \ell^m \cdot |\mu_1\cdots\mu_k|^\ell.
\]
\end{prop}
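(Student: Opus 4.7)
The plan is to combine the two two-sided estimates that the author has already assembled in the paragraphs immediately preceding the proposition, so almost all the work has been done. The first ingredient is the formula $\|(f^{\ell})^{*}\| = \|(\wedge^{k}\psi)^{\ell}\|$ on $(\P^{1})^{n}$, which follows from Theorem~\ref{thm:pullback_on_P1n} together with the observation that taking the matrix norm ``maximum absolute value of entries'' is invariant under replacing entries by their absolute values. The second ingredient is the linear algebra fact (cited as \cite[Lemma 6.4]{ljl}) that for any integer matrix $A$ of size $p\times p$ with spectral radius $\rho$, there exist $c_{1}\ge c_{0}>0$ and an integer $0\le m\le p-1$ (determined by the size of the largest Jordan block of $A$ of eigenvalue of modulus $\rho$) such that $c_{0}\,\ell^{m}\rho^{\ell}\le \|A^{\ell}\|\le c_{1}\,\ell^{m}\rho^{\ell}$.

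Applying this to $A=\wedge^{k}\psi$, whose spectral radius is $|\mu_{1}\cdots\mu_{k}|$ and whose size is $p=C(n,k)$, yields
\[
c_{0}\cdot \ell^{m}\cdot |\mu_{1}\cdots\mu_{k}|^{\ell} \;\le\; \|(f^{\ell})^{*}\| \;\le\; c_{1}\cdot \ell^{m}\cdot |\mu_{1}\cdots\mu_{k}|^{\ell}
\]
with $0\le m\le C(n,k)$ as required.

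The second step is to transport this estimate from the model $(\P^{1})^{n}$ to $\P^{n}$ via the birational conjugation $\tilde{f}=g\circ f\circ g^{-1}$, where $g$ comes from the identity on $N$. The pullback $(\tilde{f}^{\ell})^{*}$ on $H^{k,k}(\P^{n};\R)\cong\R$ is multiplication by the integer $\deg_{k}(\tilde{f}^{\ell})$, so here ``norm'' and ``degree'' agree up to constants. The estimate \cite[Corollary 2.7]{Guedj} for birational conjugation then gives
\[
c'_{0}\cdot \|(f^{\ell})^{*}\| \;\le\; \deg_{k}(\tilde{f}^{\ell}) \;\le\; c'_{1}\cdot \|(f^{\ell})^{*}\|
\]
for constants $c'_{1}\ge c'_{0}>0$ independent of $\ell$.

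Multiplying the two chains of inequalities and setting $C_{0}=c'_{0}c_{0}$ and $C_{1}=c'_{1}c_{1}$ gives exactly the statement. The only point that requires a moment of care, and which I would call the main (very minor) obstacle, is confirming that the integer $m$ produced by the linear algebra step depends only on the Jordan structure of $\wedge^{k}\psi$ and is therefore a single integer uniformly for all $\ell$; once that is checked, no further work is needed, so I do not anticipate any serious difficulty.
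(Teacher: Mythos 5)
Your proposal is correct and follows exactly the paper's own argument: the proposition is stated with only a \qed because its proof is precisely the combination of the two displayed estimates in the preceding paragraphs (the Jordan-form bound on $\|(\wedge^k\psi)^\ell\|$ from \cite[Lemma 6.4]{ljl} and the birational-conjugation bound from \cite[Corollary 2.7]{Guedj}), multiplied together just as you describe. No gaps.
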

\hfill\qed


\section{The degrees of the Cremona involution on $\P^n$}
An important example of monomial maps is the Cremona involution $J:\P^n\dashrightarrow\P^n$.
It is the map defined by $J(z_1,\cdots,z_n)=(z_1^{-1},\cdots,z_n^{-1})$ on $(\C^*)^n$,
compactified to a birational map on $\P^n$. The main result of this section is the following

\begin{thm}
\label{thm:deg_J}
The $k$-th degree of the map $J$ is given by
\[
\deg_k(J)=C(n,k)=\frac{n!}{k!(n-k)!}
\]
for $k=0,\cdots,n$.
\end{thm}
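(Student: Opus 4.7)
The plan is to realize the Cremona involution $J$ as the monomial map on $\P^n$ induced by $\psi = -I_n$ and to compute $\deg_k(J)$ via the pullback machinery of Section~\ref{sec:pullback}. Since $A^k(\P^n) \cong \Z \cdot H^k$, the pullback $J^*$ will act on $A^k$ as multiplication by $\deg_k(J)$, so it will suffice to evaluate the Minkowski weight $J^*(c_{H^k})$ at a single codim-$k$ cone of $\fan = \fan_{\P^n}$. Note that the weight $c_{H^k}$ representing $H^k$ is identically $1$ on $\fan^{(k)}$, because every torus-invariant $k$-dimensional subvariety of $\P^n$ is a coordinate linear subspace of degree one.

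The first step is to construct a smooth projective refinement $\tilde{\fan}$ of $\fan$ such that $-I$ sends every cone of $\tilde{\fan}$ into some cone of $\fan$. A natural choice contains the rays $\pm v_0, \pm v_1, \dots, \pm v_n$, where $v_0 = -(e_1 + \cdots + e_n)$ and $v_i = e_i$; equivalently, $\tilde{\fan}$ is a common refinement of $\fan$ and $-\fan$. I would then apply the pullback formula of Section~3.2: since $\psi = -I$ is unimodular, $(\tilde{f}^* c_{H^k})(\sigma') = 1$ precisely when $-\sigma'$ lies in the relative interior of some codim-$k$ cone of $\fan$, and $0$ otherwise. Analogously, $(\pi^* c_{\sigma_S})(\tau)$ is nonzero exactly when $\tau$ lies in the relative interior of some codim-$(n-k)$ cone $\sigma_T$ of $\fan$, and in that case it equals $c_{\sigma_S}(\sigma_T) = 1$ (all codim-$(n-k)$ torus-invariant subvarieties of $\P^n$ being linear subspaces of degree one).

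Finally, I would apply the projection formula of Section~3.3 at a fixed $\sigma_S \in \fan^{(k)}$:
\[
\deg_k(J) \;=\; \sum_{(\sigma',\tau)} m_{\sigma',\tau}\,(\tilde{f}^* c_{H^k})(\sigma')\,(\pi^* c_{\sigma_S})(\tau),
\]
where the sum runs over pairs $(\sigma',\tau) \in \tilde{\fan}^{(k)} \times \tilde{\fan}^{(n-k)}$ with $\sigma'$ meeting $\tau + v$ for a generic $v \in N$. The principal obstacle will be the combinatorial enumeration: after the nonvanishing conditions above, one is left summing over pairs $(\sigma', \tau)$ with $\sigma'$ arising from the cones $-\sigma_{S'}$ (for $(n-k)$-subsets $S' \subseteq \{0, 1, \dots, n\}$) and $\tau$ from the cones $\sigma_T$ (for $k$-subsets $T$), and one must track the ``meets via $v$'' indicator (all indices $m_{\sigma',\tau}$ equalling $1$ by unimodularity). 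The total count should evaluate to $\binom{n}{k}$. A small-case check for $n = 2, k = 1$ (where $\tilde{\fan}$ has six rays $\pm e_1, \pm e_2, \pm(e_1+e_2)$) yields exactly two contributing pairs for a generic $v$, giving $\deg_1(J) = 2 = \binom{2}{1}$, matching the classical degree of the planar Cremona transformation. The boundary cases $\deg_0(J) = \deg_n(J) = |\det(-I)| = 1$ and the involutory symmetry $\deg_k(J) = \deg_{n-k}(J)$ forced by $J = J^{-1}$ serve as further consistency checks.
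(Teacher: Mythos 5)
Your strategy is the same as the paper's: resolve $J$ via a smooth projective refinement $\tilde{\fan}$, pull back the constant Minkowski weight $c_k$ by $\tilde{J}$, and evaluate $\pi_*\tilde{J}^*c_k$ through the fan-displacement formula of Section~\ref{sec:proj_formula}. The setup is correct, including the observation that the relevant weights take only the values $0$ and $1$ and that the surviving multiplicities $m_{\sigma',\tau'}$ equal $1$.

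However, there is a genuine gap exactly where you flag ``the principal obstacle'': you assert that the enumeration ``should evaluate to $\binom{n}{k}$'' and support this only with the case $n=2$, $k=1$. That enumeration is the entire mathematical content of the theorem, and it is not a routine count, because the fan of $\P^n$ contains the ray $e_0=-(e_1+\cdots+e_n)$, so one must rule out contributions from pairs of cones involving $e_0$. The paper does this by choosing the generic displacement vector $v=-(v_1,\dots,v_n)$ with all $v_i>0$ and arguing from sign constraints: if $\sigma=\sigma_\alpha$, $\tau=\tau_\beta$ and $(-\sigma)\cap(\tau+v)\neq\emptyset$, i.e.\ $-v\in\sigma+\tau$, then $0\notin\alpha$, $0\notin\beta$ (otherwise some coordinate of every point of $\sigma+\tau$ is nonpositive while the corresponding coordinate of $-v$ is positive), and moreover $\alpha\cup\beta=\{1,\dots,n\}$; conversely each such partition yields exactly one intersection point and lattice index $[N:N_{\sigma_\alpha}+N_{\tau_\beta}]=1$. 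Only then does the count of contributing pairs become the number of partitions of $\{1,\dots,n\}$ into an $(n-k)$-set and a $k$-set, namely $C(n,k)$. Without this positivity argument your proposal does not establish that pairs involving $e_0$ drop out, nor that each admissible partition contributes exactly once, so the identity $\deg_k(J)=C(n,k)$ remains unproved for general $n$ and $k$.
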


The rest of this section is devoted to the proof of this theorem.
Throughout this section, $\fan$ will denote the standard fan associated to $\P^n$,
and $\tilde{\fan}$ will be the required refinement of $\fan$ with respect to the map $J$.

\subsection{Fan structure of $\fan$ and (co)homology of $\P^n$} Let $\n0 = \{ 0,1,\cdots,n\}$,
$e_1,\cdots,e_n$ be the standard basis of $N\cong\Z^n$, and $e_0=-(e_1+\cdots+e_n)$.
The cones in $\fan$ are generated by proper subsets of $\{e_0,e_1,\cdots,e_n\}$.
For any proper subset
$\alpha\subsetneq \n0$, we denote $\sigma_\alpha$ to be the cone generated by those $e_i$ where
$i\in\alpha$, i.e., $\sigma_\alpha=\{\sum_{i\in\alpha} a_i e_i  |  a_i\ge 0\}$.

The dimension of $\sigma_\alpha$ is $|\alpha|$, thus
$\sigma_\alpha\in\fan^{(k)} \Longleftrightarrow |\alpha|=n-k$, and $A_k(\P^n)$ is generated
by the classes of $V(\sigma_\alpha)$ for $|\alpha|=n-k$. The relations are given by
$[V(\sigma_{\alpha_1})]=[V(\sigma_{\alpha_2})]$ for any two $\alpha_1,\alpha_2\subset\n0$ with
$n-k$ elements.

As a consequence, a map $c: \fan^{(k)} \to \Z$ is a Minkowski weight if and only if it is a
constant function, i.e., $c(\sigma_1)=c(\sigma_2)$ for all $\sigma_1,\sigma_2\in \fan^{(k)}$.
Moreover, let $c_k: \fan^{(k)} \to \Z$ be the map defined by $c_k(\sigma)=1$ for all $\sigma\in\fan^{(k)}$,
then the Chow cohomology group $A^k(\P^n)\cong\Z$ is generated by $c_k$.

\subsection{The involution $J$ and its degrees} The map $J$ is induced by the linear map
$-I$, where $I$ is the identity map on $N$. Thus, to obtain the refinement $\tilde{\fan}$,
we need to subdivide $\fan$ so that every
cone after subdivision is mapped by $-I$ into some cone of $\fan$, and we have the following
diagram:
\[
\xymatrix{
& X(\tilde{\fan})\ar[ld]_{\pi}\ar[rd]^{\tilde{J}}\\
\P^n=X(\fan)\ar@{-->}[rr]_J & & X(\fan)
}
\]
Both the maps $\pi$ and $\tilde{J}$ are toric morphisms. This means, for each $\sigma'\in\tilde{\fan}$,
there exist cones $\sigma,\tau\in\fan$ such that $\sigma'\subseteq\sigma$ and $-\sigma'\subseteq\tau$.
Notice that $-\sigma'\subseteq\tau$ is equivalent to $\sigma'\subseteq -\tau$.

\begin{proof}[Proof of Theorem \ref{thm:deg_J}]
Let $c_k\in A^k(\P^n)$ and $c_{n-k}\in A^{n-k}(\P^n)$ be the generators of the two Chow cohomology
groups of complementary dimensions. Then by the formula in Section~\ref{sec:proj_formula},
the $k$-th degree of the map $J$ is given by
\begin{equation}
\label{eq:pullback_J}
\begin{split}
\deg_k(J) &= \Bigl( (\tilde{J}^* c_k) \cup (\pi^* c_{n-k}) \Bigr) (\{0\})\\
          &= \sum_{(\sigma',\tau')\in \tilde{\fan}^{(k)}\times \tilde{\fan}^{(n-k)}} m_{\sigma',\tau'}
             \cdot (\tilde{J}^* c_k)(\sigma') \cdot (\pi^* c_{n-k})(\tau'),
\end{split}
\end{equation}
where the sum is over all pairs
$(\sigma',\tau')$ such that $\sigma'$ meets $\tau'+v$ for a fixed generic $v\in N$.
We analyze this sum in the following.

First, for $\tau'\in\tilde{\fan}^{(n-k)}$, let $\tau\in\fan$ be the smallest cone such that $\tau'\subseteq\tau$.
We have the following formula for $\pi^* c_{n-k}$.
\[
(\pi^*c_{n-k}) (\tau')= \begin{cases} 1 &   \text{if $\codim(\tau)=n-k$,} \\
                  0 &   \text{if $\codim(\tau) < n-k$.}  \end{cases}
\]
Similarly, for $\sigma'\in\tilde{\fan}^{(k)}$, let $\sigma\in\fan$ be the smallest cone containing
$-\sigma'$, then
\[
(\tilde{J}^*c_{n-k}) (\sigma')= \begin{cases} 1 &   \text{if $\codim(\sigma)=k$,} \\
                  0 &   \text{if $\codim(\sigma) < k$.}  \end{cases}
\]
Therefore, in the sum (\ref{eq:pullback_J}), in order to have a nonzero summand, we must have
$\tau'\subseteq\tau$ and $\sigma'\subseteq -\sigma$ for some $\tau,\sigma\in\fan$
of codimensions $(n-k)$ and $k$, respectively.

Next, notice that for a fixed generic $v\in N$, the two sets $-\sigma$ and $\tau+v$ will either
be disjoint or intersect at exactly one point. If they are disjoint, then for all those
$(\sigma',\tau')\in \tilde{\fan}^{(k)}\times \tilde{\fan}^{(n-k)}$ with
$\tau'\subseteq\tau$ and $\sigma'\subseteq -\sigma$, the corresponding terms does not appear in the sum.
If $(-\sigma)\cap(\tau+v)=\{P\}$ a single point, then for generic $v$, we will have
$P\in \sigma'$ and $P\in \tau'+v$ for a unique pair
$(\sigma',\tau')\in \tilde{\fan}^{(k)}\times \tilde{\fan}^{(n-k)}$ with
$\tau'\subseteq\tau$ and $\sigma'\subseteq -\sigma$.
Only for this pair $(\sigma',\tau')$, the corresponding term will appear in the summand, and
the corresponding term is
\[
\begin{split}
 & m_{\sigma',\tau'} \cdot (\tilde{J}^* c_k)(\sigma') \cdot (\pi^* c_{n-k})(\tau') \\
=& [N:N_{\sigma'}+N_{\tau'}]\cdot 1\cdot 1 \\
=& [N:N_{\sigma}+N_{\tau}].
\end{split}
\]
The last equality is because of $N_{\sigma'}=N_{\sigma}$ and $N_{\tau'}=N_{\tau}$.

Therefore, we need to find out those $(\sigma,\tau)\in \fan^{(k)}\times\fan^{(n-k)}$
such that $(-\sigma)\cap(\tau+v)$ is a single point for a given generic $v$.
Without loss of generality, we may assume that
$v= - (v_1,\cdots, v_n)$ where $v_1,\cdots,v_n$ are all positive integers.
Then, we have
\[
(-\sigma)\cap(\tau+v)\ne\emptyset ~~~\Longleftrightarrow~~~ -v=(v_1,\cdots, v_n)\in \sigma + \tau.
\]
Let $\sigma=\sigma_\alpha$ and $\tau=\tau_\beta$ for $\alpha,\beta\subset\n0$
with $|\alpha|=n-k$ and $|\beta|=k$. We first want to show the following.
\begin{claim}
If $(v_1,\cdots, v_n)\in \sigma + \tau$, then $0\not\in\alpha$ and $0\not\in\beta$.
\end{claim}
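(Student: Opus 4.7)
The plan is to argue by contradiction. Suppose $0\in\alpha$ (the case $0\in\beta$ is handled identically by symmetry). I will exhibit an index $j\in\{1,\ldots,n\}$ at which every element of $\sigma_\alpha+\sigma_\beta$ has a non-positive coordinate, contradicting the assumption that $v_j>0$.

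First, I would record the basic sign structure of the generators. Among $e_0,e_1,\ldots,e_n$, only $e_0=-(e_1+\cdots+e_n)$ has negative entries, while each $e_i$ with $i\in\{1,\ldots,n\}$ is supported in the single $i$-th coordinate. Consequently, if $j\in\{1,\ldots,n\}$ satisfies $j\notin\alpha$ and $j\notin\beta$, then writing any $x\in\sigma_\alpha$ as a non-negative combination $t_0 e_0+\sum_{i\in\alpha\setminus\{0\}} t_i e_i$, the $j$-th coordinate of $x$ equals $-t_0\leq 0$, and analogously the $j$-th coordinate of any $y\in\sigma_\beta$ is $\leq 0$. The $j$-th coordinate of $x+y$ is therefore $\leq 0$, contradicting $v_j>0$.

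Next, I would verify the existence of such a $j$ by a simple index count. Since $0\in\alpha$ and $|\alpha|=n-k$, one has $|\alpha\setminus\{0\}|=n-k-1$, while $|\beta\setminus\{0\}|\leq k$. Hence
\[
\bigl|(\alpha\cup\beta)\cap\{1,\ldots,n\}\bigr|\leq(n-k-1)+k=n-1,
\]
so at least one element of $\{1,\ldots,n\}$ lies outside $\alpha\cup\beta$, and this is the desired $j$.

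There is no real obstacle: the claim reduces to a coordinate sign computation combined with an elementary cardinality bound, the latter being what actually forces $0$ out of both index sets.
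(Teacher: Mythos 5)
Your proposal is correct and follows essentially the same route as the paper: locate an index $j\in\{1,\ldots,n\}$ outside $\alpha\cup\beta$, note that the $j$-th coordinate of every point of $\sigma+\tau$ is non-positive because only $e_0$ can contribute there, and contradict $v_j>0$. The only difference is that you spell out the cardinality count $(n-k-1)+k=n-1$ guaranteeing such a $j$, which the paper merely asserts.
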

Suppose, on the contrary, that $0\in\alpha$ or $0\in\beta$, then there must be some $i\in\{1,\cdots,n\}$
such that $i\not\in\alpha\cup\beta$. Without loss of generality, assume $1\not\in\alpha\cup\beta$.
Then since $e_0=(-1,\cdots,-1)$, and every point in $\sigma+\tau$ can be written in the form
\[
r_0\cdot e_0 + \sum_{i=2}^n r_i\cdot e_i \text{\ \ \ for $r_i\ge 0$},
\]
thus the first coordinate of every point in $\sigma+\tau$ is non-positive.
However, the first coordinate of $-v$ is $v_1>0$, they cannot be equal.
So we conclude that $0\not\in\alpha$ and $0\not\in\beta$.

Now that we know $\alpha\cup\beta \subset \{ 1,\cdots,n\}$, our next claim is the following.
\begin{claim}
We have $\alpha\cup\beta = \{ 1,\cdots,n\}$.
Thus, $\alpha$ and $\beta$ form a partition of the set $\{ 1,\cdots,n\}$.
\end{claim}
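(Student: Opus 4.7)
My plan is to argue by contradiction, in direct parallel with the preceding claim. The previous claim has already eliminated the index $0$ from both $\alpha$ and $\beta$, so $\alpha,\beta\subseteq\{1,\ldots,n\}$. Since $|\alpha|+|\beta|=(n-k)+k=n$, proving $\alpha\cup\beta=\{1,\ldots,n\}$ is equivalent to proving $\alpha\cap\beta=\emptyset$, and either conclusion forces the partition statement. I will work with the union formulation because it is the one that interacts directly with the coordinate argument used before.

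The key step is simply to read off a coordinate of the equation $-v=(v_1,\ldots,v_n)\in\sigma_\alpha+\tau_\beta$ just as in the previous claim, but now at an index in $\{1,\ldots,n\}$ missing from $\alpha\cup\beta$. Suppose for contradiction that $\alpha\cup\beta\subsetneq\{1,\ldots,n\}$ and pick $i\in\{1,\ldots,n\}\setminus(\alpha\cup\beta)$. Any point in $\sigma_\alpha+\tau_\beta$ can be written as
\[
\sum_{j\in\alpha}a_j e_j+\sum_{j\in\beta}b_j e_j,\qquad a_j,b_j\ge 0,
\]
with no contribution from $e_0$, since $0\notin\alpha\cup\beta$. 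Each $e_j$ appearing in this sum has $j\in\{1,\ldots,n\}\setminus\{i\}$, so its $i$-th coordinate is $0$. Hence the $i$-th coordinate of every point in $\sigma_\alpha+\tau_\beta$ is $0$, contradicting the fact that the $i$-th coordinate of $(v_1,\ldots,v_n)$ is $v_i>0$.

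The main obstacle in the overall argument has already been handled by the first claim, which ruled out $e_0$; once that is in place, the second claim is a one-line coordinate check. The only subtlety worth flagging is that, unlike in the first claim (where one exploited $e_0$ to produce non-positive first coordinates from a cone generated by $e_0$ together with some $e_j$'s), here one exploits the absence of $e_0$ to produce a coordinate that is forced to vanish. After the contradiction, the cardinality count $|\alpha|+|\beta|=n$ immediately upgrades $\alpha\cup\beta=\{1,\ldots,n\}$ to the partition statement, finishing the claim.
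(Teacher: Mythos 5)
Your proof is correct and follows essentially the same route as the paper: both argue by contradiction, picking an index $i\in\{1,\dots,n\}$ missing from $\alpha\cup\beta$ and observing that (since $0\notin\alpha\cup\beta$ by the previous claim) the $i$-th coordinate of every point of $\sigma_\alpha+\tau_\beta$ vanishes, contradicting $v_i>0$. Your explicit remark that the cardinality count $|\alpha|+|\beta|=n$ upgrades the union statement to the partition statement is a small but welcome addition the paper leaves implicit.
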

Suppose, for example, that $1\not\in\alpha\cup\beta$,
then the first coordinate of every point in $\sigma+\tau$ is $0$.
But, again, we have $v_1>0$ which leads to a contradiction, so the claim is proved

Finally, for every partition $\alpha$ and $\beta$
of the set $\{ 1,\cdots,n\}$ with $|\alpha|=n-k$ and $|\beta|=k$, there is
a unique point in $(-\sigma_\alpha)\cap(\tau_\beta+v)$.
For example, if $\alpha=\{1,\cdots,k\}$ and $\beta=\{k+1,\cdots,n\}$, then the unique point is
$(-v_1,\cdots,-v_k,0,\cdots,0)$. Furthermore, for every such partition $\alpha,\beta$
of $\{ 1,\cdots,n\}$, we have
\[
[N:N_{\sigma_\alpha}+N_{\tau_\beta}]=[N:N]=1.
\]
There are $C(n,k)$ such partitions of $\{ 1,\cdots,n\}$, each will contribute 1
to the summation in (\ref{eq:pullback_J}).
Therefore, we have $\deg_k(J)=C(n,k)$ and this completes the proof.
\end{proof}


\begin{bibdiv}
\begin{biblist}

\bib{Dan}{article}{
title={The geometry of toric varieties},
author={Danilov, V. I.},
journal={Russ. Math. Surv.},
volume={33},
date={1978},
pages={97--154}
}

\bib{DS}{article}{
title={Une borne sup\'{e}rieure pour l'entropie topologique d'une application rationnelle},
author={Tien-Cuong Dinh},
author={Nessim Sibony},
journal={Ann. Math.},
volume={161},
date={2005},
pages={1637--1644}
eprint={arXiv:0303271 [math.DS]}
}

\bib{Fa}{article}{
title={Les applications monomiales en deux dimensions},
author={Favre, Charles},
journal={Michigan Math. J.},
volume={51},
number={3},
date={2003},
pages={467--475}
eprint={arXiv:0210025 [math.CV]}
}

\bib{FW}{article}{
title={Degree growth of monomial maps},
author={Favre, Charles},
author={Wulcan, Elizabeth},
status={Preprint},
}

\bib{Fu}{book}{
title={Introduction to Toric Varieties},
author={Fulton, William},
series={Annal of Math Studies}
volume={131}
date={1993},
publisher={Princeton University Press},
address={Princeton, NJ}
}

\bib{FuInt}{book}{
title={Intersection Theory},
author={Fulton, William},
date={1998},
publisher={Springer-Verlag},
}

\bib{FuSt}{article}{
title={Intersection Theory on Toric Varieties},
author={William Fulton},
author={Bernd Sturmfels},
journal={Topology},
volume={36},
date={1997},
pages={335--354}
eprint={arXiv:9403002 [math.AG]}
}

\bib{Guedj}{article}{
title={Propri\'{e}t\'{e}s ergodiques des applications rationnelles},
author={Vincent Guedj},
eprint={arXiv:0611302 [math.CV]}
}

\bib{HP}{article}{
title={Degree-growth of monomial maps},
author={Hasselblatt, Boris},
author={Propp, James},
journal={Ergodic Theory Dynam. Systems},
volume={27},
number={5},
date={2007},
pages={1375--1397}
eprint={arXiv:0604521 [math.DS]}
}

\bib{JW}{article}{
title={Stabilization of monomial maps},
author={Jonsson, Mattias},
author={Wulcan, Elizabeth},
eprint={arXiv:1001.3938 [math.DS]}
}

\bib{ljl}{article}{
title={Algebraic stability and degree growth of monomial maps and polynomial maps},
author={Jan-Li Lin},
eprint={arXiv:1007.0253 [math.DS]}
}

\bib{RS}{article}{
title={Value distribution for sequences of rational mappings and complex dynamics},
author={Alexander Russakovskii},
author={Bernard Shiffman},
journal={Indiana Univ. Math. J.},
volume={46},
number={3},
date={1997},
pages={897-932}
eprint={arXiv:9604204 [math.CV]}
}

\end{biblist}
\end{bibdiv}

\end{document}